\newtheorem{theorem}{Theorem}[section]
\newtheorem*{theorem*}{Theorem}
\newtheorem*{lemma*}{Lemma}
\newtheorem{proposition}[theorem]{Proposition}
\newtheorem*{proposition*}{Proposition}
\theoremstyle{definition}
\newtheorem*{definition*}{Definition}
\newtheorem{example}[theorem]{Example}
\newtheorem*{example*}{Example}
\newtheorem*{observation*}{Observation}
\newtheorem*{claim*}{Claim}
\newtheorem{remark}[theorem]{Remark}
\newtheorem*{remark*}{Remark}
\title{Enumerative Theory for the Tsetlin Library}
\let\sparetitle\@title
\author{Sourav Chatterjee\thanks{souravc@stanford.edu}}
\author{Persi Diaconis\thanks{diaconis@math.stanford.edu}}
\author{Gene B. Kim\thanks{genebkim@stanford.edu}}
\affil{Departments of Mathematics and Statistics}
\affil{Stanford University}
\begin{document}

\maketitle

\begin{center}
    In memory of Georgia Benkart
\end{center}

\begin{abstract}
	The Tsetlin library is a well-studied Markov chain on the symmetric group $S_n$. It has stationary distribution $\pi(\sigma)$ the Luce model, a nonuniform distribution on $S_n$, which appears in psychology, horse race betting, and tournament poker. Simple enumerative questions, such as ``what is the distribution of the top $k$ cards?'' or ``what is the distribution of the bottom $k$ cards?'' are long open. We settle these questions and draw attention to a host of parallel questions on the extension to the chambers of a hyperplane arrangement.
\end{abstract}

\section{Introduction}

Let $\theta_1, \theta_2, \dots, \theta_n$ be positive real numbers. The \textbf{Luce model} $\pi(\sigma)$ is a probability distribution on the symmetric group $S_n$ driven by these weights. In words, ``put $n$ balls with weights $\theta_1, \theta_2, \dots, \theta_n$ into an urn. Each time, withdraw a ball from the urn (sampling without replacement) with probability proportional to its weight (relative to the remaining balls).'' Thus, if $w_n = \theta_1 + \cdots + \theta_n$,
\begin{equation}\label{luce}
	\pi(\sigma) = \frac{\theta_{\sigma(1)}}{w_n} \frac{\theta_{\sigma(2)}}{w_n - \theta_{\sigma(1)}} \frac{\theta_{\sigma(3)}}{w_n - \theta_{\sigma(1)} - \theta_{\sigma(2)}} \cdots \frac{\theta_{\sigma(n)}}{\theta_{\sigma(n)}}.
\end{equation}
In Section \ref{backgroundsec}, a host of applied problems are shown to give rise to the Luce model. These include the celebrated Tsetlin library, a Markov chain on $S_n$, described as: ``at each step, choose a card labeled $i$ with probability proportional to $\theta_i$ and move it to the top'' -- $\pi(\sigma)$ is the stationary distribution of this Markov chain.

It is natural to ask basic enumerative questions: pick $\sigma$ from $\pi(\sigma)$.
\begin{itemize}
	\item What is the distribution of the number of fixed points, cycles, longest cycle, and order of $\sigma$?
	\item What is the distribution of the length of the longest increasing subsequence of $\sigma$?
\end{itemize}

Most of these questions are open at this time. The main results below settle
\begin{itemize}
	\item What is the distribution of the top $k$ cards $\sigma(1), \dots, \sigma(k)$? (Section \ref{topsec})
	\item What is the distribution of the bottom $k$ cards $\sigma(n-k+1), \dots, \sigma(n)$? (Section \ref{bottomsec})
\end{itemize}

Weighted sampling without replacement from a finite population is a standard topic. This may be accessed from the Wikipedia entries of ``Horvitz--Thompson estimator'' \cite{HorTho} and ``concomitant order statistics.''

Coming closer to combinatorics, two papers by Rosen \cite{Rosen} treat the coupon collector's problem and other coverage problems.

The recent paper by Ben-Hamou, Peres and Salez \cite{BPS} couples sampling with and without replacement so that tail and concentration bounds, derived for partial sums when sampling with replacement, are seen to apply ``as is'' to sampling without replacement.

A final item; throughout, we have assumed that the weights $\theta_i$ are fixed and known. It is also natural to consider random weights. For a full development, see \cite{PitTran}. 

Section \ref{hyperplanesec} develops the connections of the Tsetlin library to the Bidigare--Hanlon--Rockmore (BHR) walk on the chambers of a real hyperplane arrangement. Understanding the stationary distributions of these Markov chains is almost completely open.

Section \ref{backgroundsec} begins with a review of enumerative group theory. These questions make sense for continuous groups. Georgia Benkart made fundamental contributions here through her work on decomposing tensor products.

\section{Background}\label{backgroundsec}

\subsection{Enumerative Group Theory}

Let $G$ be a finite group. A classical question is ``pick $g \in G$ at random. What does it look like?'' For example, if $G = S_n$,
\begin{itemize}
	\item What is the distribution of $F(g)$, the number of fixed points of $g$?
	\item How many cycles are typical?
	\item What is the expected length of the longest cycle?
	\item What about the length of the longest increasing subsequence of $g$?
	\item What about the descent structure of $g$?
	\item How many inversions are typical?
\end{itemize}
All of these questions have classical answers (references below).

For $G = GL_n(q)$, parallel questions involve the conjugacy class structure of a random $g \in G$. For a splendid development (for finite groups of Lie type), see Fulman \cite{Ful}, which has full references to the results above. The recent survey of Diaconis and Simper \cite{DiaSim} brings this up to date. It focuses on enumeration by double cosets $H \setminus G / K$.

The questions above make sense for continuous groups, where they become ``random matrix theory.'' For example, when $G = O_n$ (the real orthogonal group), one may study the eigenvalues of $g \in G$ under Haar measure by studying the powers of traces
\[
    \int_{O_n} (\text{Tr}(g))^k \, \mathrm{d}g.
\]
Patently this asks for the number of times the trivial representation appears in the $k$th tensor power of the usual $n$-dimensional representation of $O_n$. See \cite{Dia} for details.

Georgia Benkart did extensive work on decomposing tensor powers of representations of classical (and more general) groups. She worked on this with many students and coauthors. Her monograph with Britten and Lemire \cite{BenkBritLemi} is a convenient reference. Most of this work can be translated into probabilistic limit theorems. We started to do this with Georgia during MSRI 2018, but got sidetracked into doing a parallel problem working over fields of prime characteristic in joint work with Benkart--Diaconis--Liebeck--Tiep \cite{BDLT}.

Most all of the above is enumeration under the uniform distribution. A recent trend in enumerative (probabilistic) group theory is enumeration under natural \emph{non}-uniform distributions. For example, on $S_n$,
\begin{itemize}
    \item The Ewens measure $\pi_\theta(\sigma) = Z^{-1}(\theta) \theta^{C(\sigma)}$. Here, $\theta$ is a fixed positive real number, $C(\sigma)$ is the number of cycles of $\sigma$, and $Z^{-1}(\theta)$ is a simple normalizing constant. The Ewens measure originated in biology, but has blossomed into a large set of applications. See Crane \cite{Crane}.
    \item The Mallows measure $\pi_\theta(\sigma) = Z^{-1}(\theta) \theta^{I(\sigma)}$, where $I(\sigma)$ is the number of inversions of $\theta$. This was originally studied for taste testing experiments but has again had a huge development.
    \item More generally, if $G$ is a finite group and $S \subseteq G$ is a symmetric generating set, let $\ell(g)$ be the length function and define $P_\theta(g) = Z^{-1}(\theta) \theta^{\ell(g)}$. Ewens and Mallows models are special cases with $G = S_n$ and $S = \left\{ \text{all transpositions} \right\}$ and $S = \left\{ \text{all adjacent transpositions} \right\}$.
\end{itemize}
Most of the questions studied above under the uniform distribution have been fully worked out under Ewens and Mallows measures. See the survey by Diaconis and Simper \cite{DiaSim} for pointers to a large literature.

The above can be amplified to "permutons" \cite{HKMRS} and "theons" \cite{CorRaz}. It shows that enumeration under non-uniform distributions is an emerging and lively subject. We turn next to the main subject of the present paper.

\subsection{The Luce model}

This section gives several applications where the Luce model appears.

\subsubsection{Psychology}

In psychophysics experiments, a panel of subjects are asked to rank things, such as:
\begin{itemize}
    \item Here are seven shades of red; rank them in order of brightness.
    \item Here are five tones; rank them from high to low.
    \item The same type of task occurs in taste-testing experiments. Rank these five brands of chocolate chip cookies (or wines, etc.) in order of preference.
\end{itemize}
This generates a collection of rankings (permutations) and one tries to draw conclusions.

Patently, rankings vary stochastically; if the same person is asked the same question at a later time, we expect the answers to vary slightly.

Duncan Luce introduce the model \eqref{luce} via the simple idea that each item has a true weight (say, $\theta_i$) and the model \eqref{luce} induces natural variability (which can then be compared with observed data).

Indeed, he did more, crafting a simple set of axioms for pairwise comparison and showing that any consistent ranking distribution has to follow \eqref{luce} for some choice of $\theta_i$. This story is well and clearly told in \cite{Luce1} and \cite{Luce2}.

We would be remiss in not pointing to the widespread dissatisfaction over the ``independence of irrelevant alternatives'' axiom in Luce's derivation. The long Wikipedia article on ``irrelevance of alternatives'' chronicles experiments and theory disputing this, not only for Luce but in Arrow's paradox and several related developments. Amos Tversky's ``elimination by aspects (EBA)'' model is a well-liked alternative.

\subsubsection{Exponential formulation}

Luce's work followed fifty years of effort to model such rankings. Early work of Thurstone and Spearman postulated ``true weights'' $\theta_1, \dots, \theta_n$ for the ordered values and supposed people perceived $\theta_i + \varepsilon_i, 1 \leq i \leq n$ with $\varepsilon_i$ independent normal $\mathcal{N}(0,\sigma^2)$. They then reported the ordering of these perturbed values.

Yellott \cite{Yellott} noticed that if in fact the $\varepsilon_i$ had an extreme value distribution, with distribution function $e^{-e^{-x/r}}, -\infty < x < \infty$, then the associated Thurstonian ranking model is exactly the Luce model!

It is elementary, that if the random variable $Y$ has an exponential distribution ($P(Y > x) = e^{-x}$), then $\log Y$ has an extreme value distribution. This gives the following theorem (used in Section \ref{bottomsec}):

\begin{theorem}
    For $1 \leq i \leq n$, let $X_i$ be independent exponential random variables on $[0,\infty)$ with density
    \[
        \theta_i e^{-x \theta_i}
    \]
    (so $X_i = Y_i/\theta_i$ with $Y_i$ the standard exponential). Then, the chance of the event
    \[
        X_1 < X_2 < \cdots < X_n
    \]
    is (with $w_n = \theta_1 + \cdots + \theta_n$)
    \[
        \frac{\theta_1}{w_n} \cdot \frac{\theta_2}{w_n - \theta_1} \cdot \frac{\theta_3}{w_n - \theta_1 - \theta_2} \cdots \frac{\theta_n}{\theta_n}.
    \]
\end{theorem}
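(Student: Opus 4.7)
The plan is to prove this by induction on $n$, using the two classical facts about independent exponentials: (i) if $X_1,\dots,X_n$ are independent with rates $\theta_1,\dots,\theta_n$, then $\min_i X_i$ is exponential with rate $w_n$, and $\Prob(X_i = \min_j X_j) = \theta_i / w_n$; and (ii) conditional on the value and identity of the minimum, the memoryless property implies that the excesses $X_j - \min_i X_i$ for the non-minimizing indices are again independent exponentials with their original rates. These two facts are a standard consequence of the joint density and I would quote them.

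With these tools, the induction is immediate. The base case $n=1$ is trivial (the product is the empty/single-term product equal to $1$). For the inductive step, observe that the event $\{X_1 < X_2 < \cdots < X_n\}$ forces $X_1 = \min_i X_i$. Condition on this: by (i), $\Prob(X_1 = \min_i X_i) = \theta_1/w_n$, and by (ii), given $X_1 = \min_i X_i$, the variables $X_j' := X_j - X_1$ for $j = 2,\dots,n$ are independent exponentials with rates $\theta_2,\dots,\theta_n$. The event $\{X_2 < \cdots < X_n\}$ coincides with $\{X_2' < \cdots < X_n'\}$, and the inductive hypothesis (applied to the rates $\theta_2,\dots,\theta_n$ with new total $w_n - \theta_1$) gives its conditional probability as
\[
    \frac{\theta_2}{w_n - \theta_1} \cdot \frac{\theta_3}{w_n - \theta_1 - \theta_2} \cdots \frac{\theta_n}{\theta_n}.
\]
Multiplying by $\theta_1/w_n$ yields the claimed formula.

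There is really no main obstacle here; the only thing to be careful about is justifying the memoryless step, i.e.\ that conditioning on $X_1 = \min_i X_i$ (a zero-probability-value event in the joint distribution, but a positive-probability event) leaves the shifted variables independent and exponential with the same rates. The cleanest way to see this, which I would include as a brief remark, is to write the joint density of $(X_1, X_2 - X_1, \dots, X_n - X_1)$ restricted to the region where all coordinates are nonnegative, which factors as $\theta_1 e^{-w_n x_1} \prod_{j \geq 2} \theta_j e^{-\theta_j y_j}$; integrating out $x_1$ recovers both facts at once. As an alternative, one could bypass induction entirely and compute the $n$-fold iterated integral
\[
    \int_0^\infty \theta_1 e^{-\theta_1 x_1} \int_{x_1}^\infty \theta_2 e^{-\theta_2 x_2} \cdots \int_{x_{n-1}}^\infty \theta_n e^{-\theta_n x_n} \, dx_n \cdots dx_1
\]
from the inside out; the innermost integral yields $e^{-\theta_n x_{n-1}}$, the next yields $\frac{\theta_{n-1}}{\theta_{n-1}+\theta_n} e^{-(\theta_{n-1}+\theta_n) x_{n-2}}$, and so on, producing the telescoping product. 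I would present the memoryless induction as the main proof since it makes transparent why the Luce formula arises, and mention the direct integration as an alternative check.
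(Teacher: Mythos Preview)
Your proposal is correct. The paper, however, takes exactly the route you relegate to an ``alternative check'': it writes down the $n$-fold integral
\[
    \theta_1\cdots\theta_n \int_{0}^{\infty}\!\int_{x_1}^{\infty}\!\cdots\!\int_{x_{n-1}}^{\infty} \exp\Bigl\{-\sum_{i=1}^n \theta_i x_i\Bigr\}\,dx_n\cdots dx_1
\]
and integrates from the inside out, each step peeling off a factor $\theta_n,\ \theta_n+\theta_{n-1},\ \ldots$ in the denominator. So your main argument (induction via the memoryless property and the distribution of the minimum) is a genuinely different organization of the same computation. Your approach makes the sequential-sampling interpretation of the Luce formula transparent and explains \emph{why} the factors appear in that order; the paper's direct integration is fully self-contained and avoids having to state and justify the two auxiliary facts about exponential minima. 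Since you already sketch the direct integral as a remark, you have both proofs in hand; either presentation is fine, and your change-of-variables justification of the memoryless step is clean.
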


\begin{proof}
    Consider the event $X_1 < X_2 < \cdots < X_n$. The chance of this is
\begin{align*}
	&\theta_1 \cdots \theta_n \int_{x_1 = 0}^\infty \int_{x_2 = x_1}^\infty \cdots \int_{x_n = x_{n-1}}^\infty \exp\left\{ -\sum_{i=1}^n x_i \theta_i \right\} \, dx_1 \cdots dx_n \\
	&\quad = \frac{\theta_1 \cdots \theta_n}{\theta_n} \int_{x_1 = 0}^\infty \cdots \int_{x_{n-1} = x_{n-2}}^\infty \exp\left\{ -\sum_{i=1}^{n-2} x_i \theta_i - x_{n-1}\left( \theta_{n-1} + \theta_n \right) \right\} \, dx_1 \cdots dx_{n-1} \\
	&\quad = \frac{\theta_1 \cdots \theta_n}{\theta_n \left( \theta_n + \theta_{n-1} \right)} \int_{x_1 = 0}^\infty \cdots \int_{x_{n-2} = x_{n-3}}^\infty \exp\left\{ -\sum_{i=1}^{n-3} x_i \theta_i - x_{n-2} \left( \theta_{n-2} + \theta_{n-1} + \theta_n \right) \right\} \, dx_1 \cdots dx_{n-2} \\
	&\quad = \frac{\theta_1 \cdots \theta_n}{\theta_n (\theta_n + \theta_{n-1}) (\theta_n + \theta_{n-1} + \theta_{n-2}) \cdots (\theta_n + \cdots + \theta_1)},
\end{align*}
which is indeed equal to
\[
	\frac{\theta_1}{w_n} \cdot \frac{\theta_2}{w_n - \theta_1} \cdot \frac{\theta_3}{w_n - \theta_1 - \theta_2} \cdots \frac{\theta_n}{\theta_n}.
\]
Thus, the order statistics follow the Luce model \eqref{luce}.
\end{proof}

For an application of the exponential representation to survey sampling, see Gordon \cite{Gordon}.

\subsubsection{Tsetlin library}

The great algebraist Tsetlin was forced to work in a library science institute. While there, he postulated (and solved) the following problem:

Consider $n$ library books arrnged in order $1, 2, \dots, n$. Suppose book $i$ has popularity $\theta_i$. During the day, patrons come and pick up book labeled $i$ with probability $\theta_i/w_n$ and after perusing, they replace it at the left end of the row.

This is a Markov chain on $S_n$ and Tsetlin \cite{Tsetlin} showed that it has \eqref{luce} as its stationary distribution.

The same model has been repeatedly rediscovered; in computer science, the books are discs in deep storage. When a disc is called for, it is replaced on the front of the queue to cut down on future search costs. See Dobrow and Fill \cite{DobFill}.

The model (and its stationary distribution) appear in genetics as the GEM (Griffiths--Engen--McCloskey)  distribution \cite{Donnelly}.

Over the years, a host of properties of the Tsetlin chain have been derived. For example, Phatarfod \cite{Phatarfod} found a simple formula for the eigenvalues and Diaconis \cite{Dia2} found sharp rates of convergence to stationarity (including a cutoff) for a wide class of weights. See further Nestoridi \cite{Nestoridi}. All of this is now subsumed under "hyperplane walks"; see Section \ref{hyperplanesec}.

\textbf{A monotonicity property.} Suppose, without essential loss, that $\theta_1 \geq \theta_2 \geq \cdots \geq \theta_n > 0$. Then,
\begin{itemize}
    \item The largest $\pi(\sigma)$ is for $\sigma = \text{id}$.
    \item The smallest $\pi(\sigma)$ is for $\sigma = (n, n-1, \dots, 1)$.
    \item More generally, $\pi(\sigma)$ is monotone decreasing in the weak Bruhat order on permutations.
\end{itemize}
To explain, the weak Bruhat order is a partial order on $S_n$ with cover relations $\sigma \preceq \sigma'$ if $\sigma$ can be reached from $\sigma'$ by a single adjacent transposition of the $i$th and $(i+1)$th symbols when $\sigma'(i) < \sigma'(i+1)$. Thus, when $n = 3$,
\begin{center}
    \begin{tikzpicture}
        \node (321) at (0,0) {$3 \, 2 \, 1$};
        \node (231) at (-1,1) {$2 \, 3 \, 1$};
        \node (312) at (1,1) {$3 \, 1 \, 2$};
        \node (213) at (-1,2) {$2 \, 1 \, 3$};
        \node (132) at (1,2) {$1 \, 3 \, 2$};
        \node (123) at (0,3) {$1 \, 2 \, 3$};
					
	   \draw (123) -- (213) -- (231) -- (321) -- (312) -- (132) -- (123);
    \end{tikzpicture}
\end{center}

\begin{proposition}
    For $\theta_1 \geq \theta_2 \geq \cdots \geq \theta_n$, $\pi(\sigma)$ is monotone decreasing in the weak Bruhat order.
\end{proposition}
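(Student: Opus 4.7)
The plan is to reduce the claim to checking cover relations: the weak Bruhat order is generated by its covers, so it suffices to show $\pi(\sigma) \le \pi(\sigma')$ whenever $\sigma \preceq \sigma'$ is a cover relation. From there the inequality collapses to a one‐line comparison that uses only $\theta_a \ge \theta_b$ when $a < b$.

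Concretely, fix a cover $\sigma \preceq \sigma'$ at adjacent positions $i, i+1$, and set $a = \sigma'(i)$, $b = \sigma'(i+1)$, so $a < b$ and $\sigma$ is obtained from $\sigma'$ by swapping $a$ and $b$. I would write out the product formula \eqref{luce} for both $\sigma$ and $\sigma'$ and observe that the factors for positions $1, \dots, i-1$ agree verbatim (same numerators and same denominators, since both depend only on $\{\sigma'(1), \dots, \sigma'(i-1)\}$ as a set), and the factors for positions $i+2, \dots, n$ likewise agree (the partial sum $\theta_{\sigma'(1)} + \cdots + \theta_{\sigma'(i+1)}$ is the same as $\theta_{\sigma(1)} + \cdots + \theta_{\sigma(i+1)}$, since addition is commutative, and the remaining entries are identical). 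Letting $S = \theta_{\sigma'(1)} + \cdots + \theta_{\sigma'(i-1)}$, the only surviving factors give
\[
    \frac{\pi(\sigma)}{\pi(\sigma')} \;=\; \frac{\dfrac{\theta_b}{w_n - S} \cdot \dfrac{\theta_a}{w_n - S - \theta_b}}{\dfrac{\theta_a}{w_n - S} \cdot \dfrac{\theta_b}{w_n - S - \theta_a}} \;=\; \frac{w_n - S - \theta_a}{w_n - S - \theta_b}.
\]
Since $a < b$ and the weights are nonincreasing, $\theta_a \ge \theta_b$, so this ratio is $\le 1$, which is exactly $\pi(\sigma) \le \pi(\sigma')$.

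The bulleted consequences then follow immediately: the identity $\mathrm{id}$ is the unique maximum of the weak Bruhat order, and the reverse permutation $w_0 = (n, n-1, \dots, 1)$ is the unique minimum, so $\pi(\mathrm{id})$ and $\pi(w_0)$ are the maximum and minimum values of $\pi$ respectively. There is no real obstacle here: the proof is essentially just the cancellation above, with the only subtlety being the bookkeeping that verifies the factors outside positions $i, i+1$ truly do cancel between $\sigma$ and $\sigma'$.
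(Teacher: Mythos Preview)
Your proof is correct and follows essentially the same approach as the paper: reduce to cover relations, observe that all factors in the Luce product cancel except at positions $i$ and $i+1$, and compare the one surviving denominator. Your bookkeeping is in fact more explicit than the paper's, which simply asserts that the formulas ``only differ in one term'' and writes down the resulting ratio.
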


\begin{proof}
    The formulas for $\pi(\sigma)$ and $\pi(\sigma')$ only differ in one term. If $\sigma_i > \sigma_{i+1}$ and these terms were transposed from $\sigma'$, then
    \[
        \frac{\pi(\sigma)}{\pi(\sigma')} = \frac{1 - \theta_{\sigma_1} - \theta_{\sigma_2} - \cdots - \theta_{\sigma_{i+1}}}{1 - \theta_{\sigma_1} - \theta_{\sigma_2} - \cdots - \theta_{\sigma_i}} < 1.
    \]
\end{proof}
\textbf{Irrelevance of alternatives.} The following property characterizes the Luce measure \eqref{luce}: let $\left\{ i_1, i_2, \dots, i_k \right\}$ be a subset of $\left\{ 1, 2, \dots, n \right\}$. Fix $\theta_1, \dots, \theta_n$ and pick $\sigma$ from $\pi(\sigma)$. The distribution of cards labeled $\left\{ i_1, \dots, i_k \right\}$ follows the Luce model with parameters $\theta_{i_1}, \dots, \theta_{i_k}$. For example, the chance that $i$ is above $j$ in $\sigma$ is ${\displaystyle \frac{\theta_i}{\theta_i + \theta_j}}$. This is easy to see from the exponential representation.

\subsubsection{Order statistics and a natural choice of weights}\label{sukhatmesec}

Many questions in probability and mathematical statistics can be reduced to the study of the order statistics of uniform random variables on $[0,1]$ by using the simple fact that, if $X$ is a real random variable with continuous distribution function $F(x)$ (so $P(X \leq x) = F(x)$), then $Y = F(X)$ is uniformly distributed on $[0,1]$. This implies that standard goodness of fit tests (e.g., Kolmogorov--Smirnov) have distributions that are universal under the null hypothesis (they do not depend on $F$). If $Y$ is uniform on $[0,1]$, then $-\log Y$ is standard exponential as above, so order statistics of independent exponentials are a mainstream object of study. A marvelous introduction to this set of ideas is in Chapter 3 of \cite{Feller} with Ronald Pyke's articles on spacings \cite{Pyke} providing deeper results.

With this background, let $Y_1, Y_2, \dots, Y_n$ be independent standard exponentials on $(0,\infty)$. Denote the order statistics by $Y_{(1)} \leq Y_{(2)} \leq \cdots \leq Y_{(n)}$. The following property is easy to prove \cite{Feller}.

\begin{theorem}
    With above notation,
    \[
        Y_{(1)}, Y_{(2)} - Y_{(1)}, Y_{(3)} - Y_{(2)}, \dots, Y_{(n)} - Y_{(n-1)}
    \]
    are independent exponential random variables with distributions
    \[
        Y_{(1)} \sim E_1/n, \qquad Y_{(2)} - Y_{(1)} \sim E_2/(n-1), \qquad \dots \qquad Y_{(n)} - Y_{(n-1)} \sim E_n,
    \]
    where $E_1, \dots, E_n$ are independent standard exponentials (density $e^{-x}$ on $(0,\infty)$).
\end{theorem}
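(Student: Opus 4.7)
My plan is to prove this by a direct change of variables on the joint density of the order statistics. Since the $Y_i$ are i.i.d.\ standard exponentials, their joint density on $(0,\infty)^n$ is $\exp(-\sum_i y_i)$, so (by the standard $n!$-to-$1$ symmetrization argument) the joint density of $(Y_{(1)}, \dots, Y_{(n)})$ on the order simplex $\{0 < y_1 < \cdots < y_n\}$ is $n!\,\exp(-\sum_i y_i)$.

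Next I would change variables to the spacings $S_k := Y_{(k)} - Y_{(k-1)}$ (with $Y_{(0)} := 0$), so that $Y_{(k)} = S_1 + \cdots + S_k$. This is an affine bijection from the order simplex onto $(0,\infty)^n$ whose Jacobian matrix is lower triangular with unit diagonal, hence has determinant $1$. The key algebraic step is the identity
\[
    \sum_{k=1}^n Y_{(k)} \;=\; \sum_{k=1}^n \sum_{j=1}^k S_j \;=\; \sum_{j=1}^n (n - j + 1)\, S_j,
\]
obtained by swapping the order of summation (each $S_j$ appears in $Y_{(k)}$ for every $k \geq j$). Substituting this into the joint density of the order statistics and using $n! = \prod_{j=1}^n (n - j + 1)$, the joint density of $(S_1, \dots, S_n)$ on $(0,\infty)^n$ factors as
\[
    \prod_{j=1}^n (n - j + 1)\, e^{-(n - j + 1)\, s_j},
\]
which is precisely the product of the densities of independent $\mathrm{Exp}(n - j + 1)$ random variables. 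This simultaneously establishes the independence claim and identifies $S_j$ with $E_j / (n - j + 1)$ in distribution.

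I do not expect any real obstacle: the whole calculation is half a page once one keeps track of indices carefully. The one place where a slip can occur is the coefficient $(n - j + 1)$ in the sum identity, since an off-by-one error would scramble the match to the stated rates. A purely probabilistic alternative, which one might prefer for exposition, would use the memoryless property of the exponential: first show $Y_{(1)} = \min_i Y_i \sim \mathrm{Exp}(n)$ via $\Prob(\min_i Y_i > t) = e^{-nt}$, then argue by memorylessness that conditional on $Y_{(1)} = t$ the remaining $n - 1$ variables, shifted down by $t$, are i.i.d.\ standard exponentials independent of $Y_{(1)}$, and induct on $n$. The change-of-variables route has the advantage of exhibiting the entire joint law in one stroke.
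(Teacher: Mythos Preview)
Your proof is correct; the change-of-variables computation is clean and the indexing is right. Note, however, that the paper does not actually supply a proof of this theorem: it simply states the result as ``easy to prove'' and cites Feller, so there is no in-paper argument to compare against. Your density calculation (and the memoryless-induction alternative you sketch) are both standard routes that appear in the literature the paper points to.
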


It follows from our Luce calculations that the chance that the smallest spacing is $Y_{(1)}$ is $\displaystyle \frac{n}{\binom{n+1}{2}} = \frac{2}{n+1}$, and that the smallest spacing is $Y_{(2)} - Y_{(1)}$ is $\displaystyle \frac{n-1}{\binom{n+1}{2}}$, and so on. Specifically, $Y_{(j+1)} - Y_{(j)}$ has probability $\displaystyle \frac{n-j}{\binom{n+1}{2}}$ of being smallest, and $Y_{(n)} - Y_{(n-1)}$ has chance $\displaystyle \frac{1}{\binom{n+1}{2}}$ of being smallest.

The whole permutation is given by the Luce model \eqref{luce} with $\theta_i = n-i+1$. This classical fact is due to Sukhatme \cite{Sukhatme}. We will call these \textbf{Sukhatme weights} in the following discussion.

\subsubsection{Application to poker and the ICM (iterated card model)}

In tournament poker (e.g., the World Series of Poker), suppose there are $n$ players at the final table with player $i$ having $\theta_i$ dollars. It is current practice among top players to assume that the order of the players, as they are eliminated, follows the Luce model (with the player having the largest $\theta_i$ least likely to be eliminated; thus most likely to win all the money), and so on. This is called the ICM (iterated card model) and is used as a basis for splitting the total capital and for calculating chances as the game progresses. For careful details and references, see Diaconis--Ethier \cite{DiaEth}, which disputes the model.

\subsubsection{Applications to horse racing}

In horse racing, players can bet on a horse to win, place (come in second), or show (come in third). The ``crowd'' does a good job of determining the chances of each of the $n$ horses running to come in first. Call the amount bet on horse $i$ just before closing, $\theta_i$. However, the crowd does a poor job of judging the chance of a horse showing. Often, there is sufficient disparity between the crowd's bet and the true odds that money can be made (perhaps one race in four). This is despite the track's rake being 17\% of the total. A group of successful bettors uses the $\theta_i$'s and the Luce model to evaluate the chance of placing. For details, see Hausch, Lo and Ziemba \cite{HLZ} or Harville \cite{Harville}.

With this list of applications, we trust we have sufficient motivation to ask ``what does the distribution (1), $\pi(\sigma)$, look like?''

\section{The top $k$ cards}\label{topsec}

Throughout this section, without loss of generality, assume $\theta_1 + \cdots + \theta_n = 1$. For ${\boldsymbol\theta}$ and $k$ fixed, let
\begin{equation}
    P(\sigma_1 \, \sigma_2 \, \cdots \, \sigma_k) = \frac{\theta_{\sigma_1} \theta_{\sigma_2} \cdots \theta_{\sigma_k}}{(1-\theta_{\sigma_1})(1-\theta_{\sigma_1}-\theta_{\sigma_2}) \cdots (1 - \theta_{\sigma_1} - \cdots - \theta_{\sigma_{k-1}})}
\end{equation}
denote the measure induced on the top $k$ cards by the Luce measure. It is cumbersome to compute, e.g.,
\[
    P(\sigma_2) = \theta_{\sigma_2} \sum_{i \neq \sigma_2} \frac{\theta_i}{1-\theta_i}.
\]
On the other hand, the Luce measure is just sampling from an urn without replacement. If $\left\{ \theta_i \right\}$ are ``not too wild'' and $k$ is small, then sampling with or without replacement should be ``about the same.'' This is made precise in two metrics.

Let $Q(\sigma_1 \, \sigma_2 \, \cdots \, \sigma_k)$ be the product measure
\begin{equation}
    Q(\sigma_1 \, \sigma_2 \, \cdots \, \sigma_k) = \theta_{\sigma_1} \theta_{\sigma_2} \cdots \theta_{\sigma_k},
\end{equation}
where $\sigma_1,\ldots, \sigma_k$ need not be distinct. Both $P$ and $Q$ depend on $\left\{ \theta_i \right\}$ and $k$, but this is suppressed below. Define
\[
    d_\infty(P, Q) = \max_\sigma \left( 1 - \frac{Q(\sigma_1 \, \cdots \, \sigma_k)}{P(\sigma_1 \, \cdots \, \sigma_k)} \right)
\]
and
\[
    \| P - Q \|_{TV} = \frac{1}{2} \sum_\sigma \left\lvert P(\sigma_1 \, \cdots \, \sigma_k) - Q(\sigma_1 \, \cdots \, \sigma_k) \right\rvert,
\]
where, in both formulas, $\sigma_1,\ldots,\sigma_k$ are not necessarily distinct, and $P(\sigma_1\, \cdots\,\sigma_k) =0$ if they are not distinct. Clearly, $\| P - Q \|_{TV} \leq d_\infty(P, Q)$.

\begin{theorem}\label{distthm1}
    For $\theta_1 + \cdots + \theta_n = 1$, $\theta_i \leq \frac{1}{2}$ for all $1 \leq i \leq n$,
    \[
        d_\infty(P,Q) \leq 1 - \exp\left\{ -2\left( (k-1)\theta_{(1)} + (k-2)\theta_{(2)} + \cdots + \theta_{(k-1)} \right) \right\}.
    \]
    Here, $\theta_{(1)} \geq \theta_{(2)} \geq \cdots \geq \theta_{(n)}$ are the ordered values.
\end{theorem}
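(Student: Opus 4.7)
The plan is to write $Q/P$ as an explicit product, identify the configuration that minimizes it, bound that minimum via an elementary one-variable inequality, and then recognize the exponent.

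First I would observe that the definitions of $P$ and $Q$ collapse, for any distinct $k$-tuple $(\sigma_1,\ldots,\sigma_k)$, to
\[
   \frac{Q(\sigma_1\cdots\sigma_k)}{P(\sigma_1\cdots\sigma_k)} \;=\; \prod_{j=1}^{k-1}(1-S_j),\qquad S_j \;:=\; \theta_{\sigma_1}+\cdots+\theta_{\sigma_j},
\]
so $d_\infty(P,Q) = 1 - \min_{\sigma}\prod_{j=1}^{k-1}(1-S_j)$, the minimum being over distinct $k$-tuples. Since $x\mapsto 1-x$ is decreasing, a short greedy/rearrangement argument shows the product is minimized by taking $\sigma_j$ to be the index of the $j$-th largest weight, simultaneously maximizing every partial sum: $S_j = T_j := \theta_{(1)}+\cdots+\theta_{(j)}$. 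The task therefore reduces to proving $\prod_{j=1}^{k-1}(1-T_j)\ge \exp\!\bigl\{-2\sum_{j=1}^{k-1}T_j\bigr\}$.

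The analytic ingredient is the elementary inequality $\log(1-x)\ge -2x$ for $x\in[0,1/2]$, which I would verify by noting that $f(x):=\log(1-x)+2x$ has $f(0)=0$ and $f'(x)=2-(1-x)^{-1}\ge 0$ on $[0,1/2]$. Applying it term by term with $x=T_j$ and swapping the order of summation yields
\[
    \sum_{j=1}^{k-1} T_j \;=\; \sum_{j=1}^{k-1}\sum_{i=1}^{j}\theta_{(i)} \;=\; \sum_{i=1}^{k-1}(k-i)\theta_{(i)} \;=\; (k-1)\theta_{(1)}+(k-2)\theta_{(2)}+\cdots+\theta_{(k-1)},
\]
which after exponentiation is exactly the bound in the statement.

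The main obstacle I anticipate is justifying the term-by-term use of the one-variable inequality: the bound $\log(1-x)\ge -2x$ requires $x\le 1/2$, whereas the hypothesis $\theta_i\le 1/2$ only controls the first partial sum $T_1$. The theorem is meaningful precisely when $(k-1)\theta_{(1)}+\cdots+\theta_{(k-1)}$ is small, and in that regime all the $T_j$ stay well inside $[0,1/2]$; the delicate points in a careful write-up will be handling the boundary behavior as $T_j\uparrow 1/2$ and confirming that the hypothesis $\theta_i\le 1/2$, together with the smallness of the exponent implicit in a nontrivial bound, really does suffice to apply the elementary inequality at every stage.
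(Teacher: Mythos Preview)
Your argument is exactly the paper's: write $Q/P=\prod_{j=1}^{k-1}(1-S_j)$, observe that the minimum over distinct tuples is attained at $S_j=T_j:=\theta_{(1)}+\cdots+\theta_{(j)}$, apply $\log(1-x)\ge -2x$ termwise, and rewrite $\sum_j T_j$ as $\sum_i(k-i)\theta_{(i)}$. The paper handles your ``main obstacle'' no better than you do --- it simply writes ``Since all $\theta_i\le\tfrac12$'' and proceeds, as though that controlled the partial sums rather than just the individual weights.

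Your instinct that something is wrong here is correct, and it cannot be repaired under the stated hypothesis: the theorem as written is false. Take $n=k=3$ with $\theta_1=\tfrac12$, $\theta_2=\tfrac12-\varepsilon$, $\theta_3=\varepsilon$ for small $\varepsilon>0$. Every $\theta_i\le\tfrac12$, but $T_2=1-\varepsilon$, and
\[
d_\infty(P,Q)=1-(1-T_1)(1-T_2)=1-\tfrac{\varepsilon}{2},
\]
while the claimed bound is $1-\exp\{-2(\tfrac32-\varepsilon)\}<1-e^{-3}<0.96$. So the inequality fails once $\varepsilon$ is small. What \emph{does} make the proof go through is the stronger assumption $T_{k-1}=\theta_{(1)}+\cdots+\theta_{(k-1)}\le\tfrac12$, which forces every $T_j\le\tfrac12$ and hence legitimizes the termwise bound. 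Since $T_{k-1}\le\sum_{j=1}^{k-1}T_j$, this is automatic whenever the exponent in the statement is at most $\tfrac12$ --- i.e., in every regime where the bound is informative, as you noted --- but it is a genuine extra hypothesis that both your write-up and the paper's proof need and neither states.
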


\begin{theorem}\label{distthm2}
    As $n \to \infty$, suppose $\displaystyle \binom{k}{2} \sum_{i=1}^n \theta_i^2 \to \lambda$. Then,
    \[
        \| P - Q \|_{TV} \sim 1 - e^{-\lambda}.
    \]
\end{theorem}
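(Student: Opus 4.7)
The plan is to reduce the total variation distance to a birthday-style collision probability for i.i.d.\ samples from $\theta$, and then establish a Poisson limit for the number of collisions.

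\emph{Step 1 (reduction).} Let $A \subset \{1,\ldots,n\}^k$ denote the set of tuples with distinct coordinates and $B$ its complement. By construction $P$ is supported on $A$, and for every $\sigma \in A$
\[
\frac{P(\sigma)}{Q(\sigma)} \;=\; \prod_{j=1}^{k-1}\frac{1}{1-\theta_{\sigma_1}-\cdots-\theta_{\sigma_j}} \;\geq\; 1,
\]
so $\{P \geq Q\} = A$. The standard maximization formula for TV distance then gives $\|P-Q\|_{TV} = P(A) - Q(A) = 1 - Q(A)$. Under $Q$ the coordinates are i.i.d.\ with $\Pr(\sigma_i = j) = \theta_j$; letting $X_1,\ldots,X_k$ be i.i.d.\ with this law and $N = \sum_{1 \leq i<j \leq k}\mathbf{1}\{X_i = X_j\}$ be the number of coincidences, one obtains
\[
\|P-Q\|_{TV} \;=\; \Pr(N \geq 1).
\]
It therefore suffices to prove $\Pr(N=0) \to e^{-\lambda}$, which is a classical birthday problem with unequal probabilities.

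\emph{Step 2 (Poisson convergence of $N$).} Note $\mathbb{E}[N] = \binom{k}{2}\sum_i \theta_i^2 \to \lambda$. I would establish $N \Rightarrow \mathrm{Poisson}(\lambda)$ via the method of factorial moments. For each fixed $r\geq 1$,
\[
\mathbb{E}\binom{N}{r} \;=\; \sum_{\{p_1,\ldots,p_r\}}\Pr(p_1,\ldots,p_r\text{ are all coincidences}),
\]
the sum ranging over $r$-subsets of $\binom{[k]}{2}$. Decompose each such collection by the graph it forms on $[k]$. Pairwise vertex-disjoint collections (the main term) contribute
\[
\frac{1}{r!}\binom{k}{2}\binom{k-2}{2}\cdots\binom{k-2r+2}{2}\Bigl(\sum_i\theta_i^2\Bigr)^{r} \;\sim\; \frac{\lambda^r}{r!}.
\]
Collections with some pairs sharing a vertex force the $X$'s to agree on connected components, introducing higher power sums $\sum_i \theta_i^m$ with $m \geq 3$. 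Using $\sum_i \theta_i^m \leq (\max_i \theta_i)^{m-2}\sum_i\theta_i^2$ and counting at most $O(k^{2r-1})$ non-disjoint configurations, their contribution is $O((k\max_i\theta_i)\cdot\lambda^r)$, which vanishes provided $k\max_i\theta_i \to 0$---the natural regularity condition implicit in the Poisson regime $k^2 \sum\theta_i^2 = O(1)$.

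\emph{Step 3 (finish).} Bonferroni's identity
\[
\Pr(N=0) \;=\; \sum_{r=0}^{\binom{k}{2}}(-1)^r\,\mathbb{E}\binom{N}{r},
\]
together with the termwise convergence $\mathbb{E}\binom{N}{r} \to \lambda^r/r!$ and a uniform tail bound on $\mathbb{E}\binom{N}{r}$ to justify dominated convergence in $r$, yields $\Pr(N=0) \to \sum_{r\geq 0}(-1)^r \lambda^r/r! = e^{-\lambda}$, as required. The principal obstacle is the combinatorial bookkeeping in Step 2---controlling the non-disjoint configurations simultaneously for all $r$ and producing a uniform bound for the Bonferroni interchange. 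A cleaner route I would consider is Chen--Stein with the dependency graph whose edges join pairs sharing a vertex; this yields $d_{TV}(\mathcal{L}(N), \mathrm{Poisson}(\lambda)) = O(k\max_i\theta_i)$ directly, from which the conclusion is immediate.
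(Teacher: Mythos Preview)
Your proof matches the paper's: your Step~1 reduction $\|P-Q\|_{TV}=\Pr(N\ge 1)$ is exactly the paper's ``preparatory observation,'' and where the paper simply cites Chatterjee--Diaconis--Meckes and Barbour--Holst--Janson for the Poisson limit of the collision count $W$, you sketch the factorial-moment (or Chen--Stein) argument yourself. Your flag on the condition $k\max_i\theta_i\to 0$ is apt---it is genuinely needed for the Poisson approximation but is not literally implied by the hypothesis $\binom{k}{2}\sum_i\theta_i^2\to\lambda$ (e.g., take $\theta_1=1/k$ with the remaining $\theta_i$ negligible), a technical point that both your argument and the paper's leave implicit in the theorem statement.
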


In Theorem 3.2, $\left\{ \theta_i \right\}$ form a triangular array, but again, this is suppressed in the notation. The remarks below point to non-asymptotic versions.

\begin{proof}[Proof of Theorem \ref{distthm1}]
    From the definitions,
    \[
        d_\infty(P,Q) = \max_\sigma \left( 1 - (1-\theta_{\sigma_1})(1 - \theta_{\sigma_1} - \theta_{\sigma_2}) \cdots (1 - \theta_{\sigma_1} - \cdots - \theta_{\sigma_{k-1}} ) \right),
    \]
    where the maximum is over all $\sigma_1,\ldots, \sigma_k$ distinct (because, if they are not distinct, then we have that ${\displaystyle 1- \frac{Q(\sigma_1\, \cdots\, \sigma_k)}{P(\sigma_1\, \cdots\, \sigma_k)} = -\infty}$, which does not contribute to the maximum). 
    Use $-2x \leq \log(1-x) \leq -x$ for $0 \leq x \leq \frac{1}{2}$. Since all $\theta_i \leq \frac{1}{2}$,
    \[
        d_\infty(P,Q) \leq \max_\sigma 1 - \exp\left\{ -2 \left( \theta_{\sigma_1} + (\theta_{\sigma_1} + \theta_{\sigma_2}) + \cdots + (\theta_{\sigma_1} + \cdots + \theta_{\sigma_{k-1}} ) \right) \right\}.
    \]
    The right-hand side is maximized for $\sigma_1, \dots, \sigma_{k-1}$ with the largest weights.
\end{proof}

\begin{proof}[Proof of Theorem \ref{distthm2}]
    A prepatory observation is useful:
    \[
        \| P - Q \|_{TV} = \sum_{\sigma: P(\sigma) \geq Q(\sigma)} (P(\sigma) - Q(\sigma)) = 1 - P_Q(\text{$\sigma_1, \dots, \sigma_k$ are distinct}).
    \]
    This is just the chance that there are two or more balls in the same box if $k$ balls are dropped independently into $n$ boxes, the chance of box $i$ being $\theta_i$. This non-uniform version of the classical birthday problem has been well-studied. If $X_{ij}$ is 1 or 0 as balls $i,j$ are dropped into the same box and
    \[
        W = \sum_{1 \leq i < j \leq k} X_{ij},
    \]
    $\displaystyle E(W) = \binom{k}{2} \sum_{i=1}^n \theta_i^2$. Under the condition $E(W) \to \lambda$, $W$ is known to have a limiting Poisson($\lambda$) distribution and $P_Q(W = 0) \sim e^{-\lambda}$. See Chatterjee--Diaconis--Meckes \cite{CDM} or Barbour--Holst--Janson \cite{BHJ} for further details and more quantitative bounds.
\end{proof}

\begin{example}
    Consider the Sukhatme weights from Section \ref{sukhatmesec}:
    \[
        \theta_i = \frac{n+1-i}{\binom{n+1}{2}}.
    \]
    The exponent for the right-hand side of Theorem \ref{distthm1} is
    \[
        \frac{2}{\binom{n+1}{2}} \left\{ (k-1)n + (k-2)(n-1) + \cdots (n-k+2) \right\}.
    \]
    Simple asymptotics show that for $k = c\sqrt{n}$, $c > 0$, this is
    \[
        4c^2 + \mathcal{O}\left( \frac{1}{\sqrt{n}} \right).
    \]
    So,
    \[
        d_\infty(P,Q) \leq 1 - e^{-4c^2 + \mathcal{O}\left( \frac{1}{\sqrt{n}} \right)},
    \]
    and $k \ll \sqrt{n}$ suffices for product measure. To be a useful approximation to the first $k$-coordinates of the Luce measure with $k = c\sqrt{n}$,
    \[
        \binom{k}{2} \sum_{i=1}^n \theta_i^2 \sim \frac{c^2}{3} = \lambda
    \]
    giving a similar approximation in total variation.
\end{example}

\begin{example}
    The bound in Theorem \ref{distthm1} is useful when
    \[
        (k-1) \theta_{(1)} + \cdots + \theta_{(k-1)}
    \]
    is small. To see that this condition is needed, take $\theta_1 = \frac{1}{2}$, $\theta_i = \frac{1}{2(n-1)}$ for $2 \leq i \leq n$. For $k = 2$,
    \[
        P(1 \, 2) = \frac{\theta_1 \theta_2}{1 - \theta_1}, \qquad Q(1 \, 2) = \theta_1 \theta_2,
    \]
    and so, $d_\infty(P,Q) = 1 - (1 - \theta_1) = \frac{1}{2}$. This does not tend to zero when $n$ is large. The two-sided bounds for $\log(1-x)$ show Theorem \ref{distthm1} is sharp in this sense for general $k$.
\end{example}

\begin{example}
    As discussed above, if the infinity distance tends to zero, then total variation tends to zero. Here is a choice of weights $\theta_i$ so that the total variation convergence holds for the joint distribution of the first $k$ coordinates of the Luce model to i.i.d.\ is close, but not in infinity distance.

    Fix $k, 1 \leq k \leq n$ and let $\theta_i = k^{-7/4}$ for $i \leq k$ and
    \[
        \theta_i = \frac{1 - k^{-3/4}}{n-k}
    \]
    for $i > k$ so that $\theta_1 \geq \theta_2 \geq \cdots \geq \theta_n > 0$ and $\displaystyle \sum_{i=1}^n \theta_i = 1$. Note that
    \[
        \binom{k}{2} \sum_{i=1}^n \theta_i^2 \leq k^2 \sum_{i=1}^k \theta_i^2 + k^2 \sum_{i=k+1}^n \theta_i^2 \leq k^{-1/2} + \frac{k^2}{n-k}
    \]
    while
    \[
        \sum_{i=1}^{k-1} (k-i)\theta_i = k^{-7/4} \sum_{i=1}^{k-1} (k-i) = \frac{1}{2} \left( k^{-3/4} (k-1) \right).
    \]
    Thus, if $1 \ll k \ll \sqrt{n}$, $\displaystyle \sum_{i=1}^{k-1} (k-i) \theta_i$ is large, but $\displaystyle \binom{k}{2} \sum_{i=1}^n \theta_i^2$ is small. Moreover, if $k = \lambda \sqrt{n}$, then $\displaystyle \binom{k}{2} \sum_{i=1}^n \theta_i^2 \to \frac{\lambda^2}{2}$, but $\displaystyle \sum_{i=1}^{k-1} (k-i)\theta_i \to \infty$.
\end{example}

\begin{remark}
    \begin{enumerate}
        \item Our proof of Theorem \ref{distthm2} used the Poisson approximation for the non-uniform version of the birthday problem. There are other possible limits which can be used to bound $\| P - Q \|_{TV}$. See \cite{CamPit}.
        \item It is easy to see that
        \[
            Q_\theta(\sigma_1, \dots, \sigma_k \text{ distinct}) = k! e_k(\theta_1, \theta_2, \dots, \theta_n),
        \]
        where $e_k$ is the $k$th elementary symmetric function. From here, Muirhead's theorem shows $\| P - Q \|_{TV}$ is a Schur-concave function of $\theta_1, \dots, \theta_n$, smallest when $\theta_i = \frac{1}{n}$.
    \end{enumerate}
\end{remark}

\section{The bottom $k$ cards}\label{bottomsec}

\subsection{Introduction}

For naturally occurring weights, the bottom $k$ cards behave very differently from the top $k$ cards. To illustrate by example, consider the Sukhatme weights of Section \ref{sukhatmesec}:
\[
    \theta_i = \frac{i}{\binom{n+1}{2}}, \qquad 1 \leq i \leq n.
\]
The results of Section \ref{topsec} show that, for large $n$,
\[
    P\left( \frac{\sigma_1}{n} \leq x \right) \sim 2 \int_0^x (1-y) \, \mathrm{d}y.
\]
That is, $\sigma_1/n$ has a limiting $\beta(1,2)$ distribution.

Using Theorems \ref{distthm1} and \ref{distthm2}, the same holds for $\sigma_i/n$ for fixed $i \ll \sqrt{n}$. Of course, large numbers have higher probabilities, but all values in $\left\{ 1, 2, \dots, n \right\}$ occur.

In contrast, consider the value of bottom card $\sigma_n$. Intuitively, this should be small since the high numbers have higher weights. We were surprised to find
\[
    P(\sigma_n = 1) \sim 0.516\dots
\]
In fact, we computed, using a result that follows, that
\begin{center}
	\begin{tabular}{|c|l|}\hline
		$\ell$ & $P(\text{$\ell$ is last})$ \\ \hline
		1 & 0.516094 \\
		2 & 0.213212 \\
		3 & 0.107310 \\
		4 & 0.0597505 \\
		5 & 0.0354888 \\
		6 & 0.0220716 \\
		7 & 0.0142167 \\
		8 & 0.00941619 \\
		9 & 0.00638121 \\
		10 & 0.00440862 \\ \hline
	\end{tabular}
\end{center}
The section below sets up its own notation from first principles.

\subsection{Main result}

Let $\mathbb{N}$ denote the set of positive integers and let $\mathbb{N}^\mathbb{N}$ be the set of all maps from $\mathbb{N}$ into $\mathbb{N}$. Consider the topology of pointwise convergence on $\mathbb{N}^\mathbb{N}$. This topology is naturally metrizable with a complete separable metric, and so we can talk about convergence of probability measures on this space.

Now suppose that for each $n$, $\sigma_n$ is a random element of the symmetric group $S_n$. We can extend $\sigma_n$ to a random element of $\mathbb{N}^\mathbb{N}$, by defining $\sigma_n(i)=i$ for $i>n$.

\begin{proposition}\label{convprop}
    Let $\sigma_n$ be as above. Then $\sigma_n$ converges in law as $n\to\infty$ if and only if for each $k$, the random vector $(\sigma_n(1),\ldots,\sigma_n(k))$ converges in law as $n\to\infty$.
\end{proposition}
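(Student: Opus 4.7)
The plan is to use the standard ``finite-dimensional convergence plus tightness implies weak convergence'' paradigm for probability measures on a Polish product space. The one direction is trivial, and the other direction should follow from tightness and Prokhorov's theorem once we extract the candidate limit via Kolmogorov extension.

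For the ``only if'' direction, I would observe that for each $k$, the projection $\pi_k:\mathbb{N}^\mathbb{N}\to\mathbb{N}^k$ sending $x\mapsto(x(1),\ldots,x(k))$ is continuous in the pointwise-convergence topology (with $\mathbb{N}^k$ carrying the discrete topology). Hence if $\sigma_n$ converges in law to some $\sigma_\infty$, the continuous mapping theorem immediately yields that $(\sigma_n(1),\ldots,\sigma_n(k))=\pi_k(\sigma_n)$ converges in law to $\pi_k(\sigma_\infty)$.

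For the ``if'' direction, suppose for each $k$ the marginal $(\sigma_n(1),\ldots,\sigma_n(k))$ converges in law to some $\mathbb{N}^k$-valued random vector $\tau_k$. These limits are consistent under coordinate projections, so Kolmogorov's extension theorem furnishes a probability measure $\mu$ on $\mathbb{N}^\mathbb{N}$ whose finite-dimensional marginals are the $\tau_k$. To show $\sigma_n$ converges in law to $\mu$, I would first verify tightness: each coordinate $\sigma_n(i)$ converges in law on the discrete space $\mathbb{N}$, hence is a tight family, so for any $\varepsilon>0$ I can pick $M_i\in\mathbb{N}$ with $P(\sigma_n(i)>M_i)<\varepsilon\,2^{-i}$ for all $n$. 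The set $K_\varepsilon=\prod_i\{1,\ldots,M_i\}$ is compact in $\mathbb{N}^\mathbb{N}$ by Tychonoff's theorem, and a union bound gives $P(\sigma_n\notin K_\varepsilon)<\varepsilon$ for all $n$. Then by Prokhorov's theorem, every subsequence of $\{\sigma_n\}$ admits a further weakly convergent subsequence; by the easy direction applied along that subsequence, the limit must have the same finite-dimensional marginals as $\mu$, hence must equal $\mu$. The standard ``subsequence of subsequence'' argument then gives $\sigma_n\Rightarrow\mu$.

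The argument is essentially soft, so the only mild obstacle is to be precise about the topology: compactness in $\mathbb{N}^\mathbb{N}$ under pointwise convergence really does reduce (by Tychonoff plus the observation that compact subsets of discrete $\mathbb{N}$ are finite) to being contained in a product of finite sets, which is exactly what makes tightness follow coordinate-by-coordinate. Once that is noted, Kolmogorov extension plus Prokhorov give the result with no further calculation, and the cylinder sets form a $\pi$-system generating the Borel $\sigma$-algebra so that finite-dimensional marginals determine $\mu$ uniquely.
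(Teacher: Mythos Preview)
Your proof is correct and follows essentially the same route as the paper: the easy direction via continuity of coordinate projections, then tightness by choosing $M_i$ coordinate-wise with geometric weights and noting that $\prod_i\{1,\ldots,M_i\}$ is compact, and finally the fact that finite-dimensional marginals determine a measure on $\mathbb{N}^\mathbb{N}$ (the paper cites Dynkin's $\pi$-$\lambda$ theorem here, which is exactly your cylinder-set $\pi$-system remark). You are slightly more explicit than the paper in spelling out the Prokhorov/subsequence step and in constructing the candidate limit via Kolmogorov extension, but the argument is the same.
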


\begin{proof}
    Since the coordinate maps on $\mathbb{N}^\mathbb{N}$ are continuous in the topology of pointwise convergence, one direction is clear.
    
    For the other direction, suppose that for each $k$, $(\sigma_n(1),\dots,\sigma_n(k))$ converges in law as $n\to\infty$. Notice that for any sequence of positive integers $a_1,a_2,\dots$, the set
    \begin{equation}\label{kdef}
        \left\{ f\in \mathbb{N}^\mathbb{N} : \text{$f(i) \leq a_i$ for all $i$} \right\}
    \end{equation}
    is a compact subset of $\mathbb{N}^\mathbb{N}$, since any infinite sequence in this set has a convergent subsequence by a diagonal argument. Take any $\varepsilon >0$. By the given condition, $\sigma_n(i)$ converges in law as $n\to\infty$ for each $i$. In particular, $\{\sigma_n(i)\}_{n\ge 1}$ is a tight family, and so there is some number $a_i$ such that for each $n$,
    \[
        P(\sigma_n(i) > a_i) \leq 2^{-i} \varepsilon.
    \]
    Therefore if $K$ denotes the set defined in \eqref{kdef} above, then for each $n$, 
    \[
        P(\sigma_n \in K) \ge 1-\sum_{i=1}^\infty P(\sigma_n(i) > a_i)\ge 1-\sum_{i=1}^\infty 2^{-i} \varepsilon = 1-\varepsilon.
    \]
    This proves that $\{\sigma_n\}_{n\ge 1}$ is a tight family of random variables on $\mathbb{N}^\mathbb{N}$. Therefore the proof will be complete if we can show that any probability measure on $\mathbb{N}^\mathbb{N}$ is determined by its finite dimensional distributions. But this is an easy consequence of Dynkin's $\pi$-$\lambda$ theorem.
\end{proof}

The above proposition implies, for instance, that if $\sigma_n$ is a uniform random element of $S_n$, then $\sigma_n$ does not converge in law on $\mathbb{N}^\mathbb{N}$, because $\sigma_n(1)$ does not converge in law.

Let $0 < \theta_1 \leq \theta_2 \leq \cdots $ be a non-decreasing infinite sequence of positive real numbers. For each $n$, consider the Luce model on $S_n$ with parameters $\theta_1,\ldots,\theta_n$. Let $\sigma_n$ be the {\it reverse} of a random permutation drawn from this model. That is, $\sigma_n(1)$ is the last ball that was drawn and $\sigma_n(n)$ is the first.
As we know from prior discussions, an equivalent definition is the following. Let $X_1,X_2,\ldots$ be an infinite sequence of independent random variables, where $X_i$ has exponential distribution with mean $1/\theta_i$. Then $\sigma_n\in S_n$ is the permutation such that $X_{\sigma_n(1)} > X_{\sigma_n(2)} >\cdots> X_{\sigma_n(n)}$.  


\begin{theorem}\label{convthm}
    Let $\sigma_n$ be as above. For each $x \geq 0$, let
    \[
        f(x) := \sum_{i=1}^\infty e^{-\theta_i x},
    \]
    where we allow $f(x)$ to be $\infty$ if the sum diverges. Let 
    \[
        x_0 := \inf \left\{x : f(x) < \infty \right\},
    \] 
    with the convention that the infimum of the empty set is $\infty$. Then $\sigma_n$ converges in law as $n\to\infty$ if and only if $x_0<\infty$ and $f(x_0)=\infty$.  Moreover, if this condition holds, then the limiting finite dimensional probability mass functions are given by the following formula: For any $k$ and any distinct positive integers $a_1,\ldots, a_k$, 
    \begin{align*}
        \lim_{n\to\infty} P(\sigma_n(1)=a_1,\dots, \sigma_n(k)=a_k) = \int_{x_1>x_2>\cdots > x_k>0} \prod_{j=1}^k(\theta_{a_j} e^{-\theta_{a_j} x_j}) \prod_{i\notin  \{a_1,\ldots,a_k\}} (1-e^{-\theta_i x_{k}}) \, \mathrm{d}x_1\cdots \mathrm{d}x_k.
    \end{align*}
\end{theorem}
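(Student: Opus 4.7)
The plan is to exploit the exponential representation (Yellott's theorem, cf.\ the theorem in Section~2.2.2): for each $n$, $\sigma_n$ lists $1,\ldots,n$ in decreasing order of independent exponentials $X_1, X_2, \ldots$ with $X_i$ of rate $\theta_i$. Combined with Proposition~\ref{convprop}, this reduces the theorem to three steps: (i) computing the joint law of $(\sigma_n(1),\ldots,\sigma_n(k))$ at finite $n$; (ii) passing to the limit $n\to\infty$ inside the integral; and (iii) determining exactly when the resulting limits sum to one.

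For step (i), fix distinct $a_1,\ldots,a_k\le n$. Conditioning on $X_{a_j}=x_j$ with $x_1>\cdots>x_k>0$, the event $\{\sigma_n(1)=a_1,\ldots,\sigma_n(k)=a_k\}$ is exactly the event that every other $X_i$ (for $i\le n$, $i\notin\{a_1,\ldots,a_k\}$) lies below $x_k$, which by independence has conditional probability $\prod_{i\le n,\ i\notin\{a_j\}}(1-e^{-\theta_i x_k})$. Multiplying by the joint density $\prod_{j=1}^k \theta_{a_j} e^{-\theta_{a_j} x_j}$ and integrating over the simplex gives the formula of the theorem with a finite product. For step (ii), this finite product decreases monotonically in $n$ to $\prod_{i\notin\{a_j\}}(1-e^{-\theta_i x_k})$, so dominated convergence with the integrable majorant $\prod_j \theta_{a_j} e^{-\theta_{a_j} x_j}$ on $\{x_1>\cdots>x_k>0\}$ yields the candidate limiting formula of the theorem.

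For step (iii), set $F(x):=\prod_{i=1}^\infty (1-e^{-\theta_i x})$, i.e.\ the distribution function of $S:=\sup_i X_i$. The standard infinite-product criterion (via $-\log(1-y)\ge y$ on $[0,1)$) says $F(x)>0$ iff $\sum_i e^{-\theta_i x}<\infty$, so $F(x)=0$ for $x<x_0$, $F(x_0)>0$ iff $f(x_0)<\infty$, and $F(\infty)=1$ iff $x_0<\infty$. Thus $F$ has an atom at $x_0$ of size $F(x_0)$ precisely when $x_0<\infty$ and $f(x_0)<\infty$. Summing the candidate limiting formula for $k=1$ over $a_1\in\mathbb{N}$ collapses the integrand to $F'(x)$, and integration gives the absolutely continuous mass of $F$: namely $F(\infty)-F(x_0)$ when $x_0<\infty$, and $0$ when $x_0=\infty$. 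This equals one exactly when $x_0<\infty$ and $f(x_0)=\infty$. Iterating the same marginalization over $a_k,a_{k-1},\ldots$ (equivalently, invoking consistency of the $n$-fidis under marginalization and passing the sum inside the limit by Fatou-plus-upper-bound-$1$) shows the $k$-dimensional total also equals this same absolutely continuous mass.

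Combining these, when $x_0<\infty$ and $f(x_0)=\infty$ the limiting fidis form a consistent family of probability distributions, and Proposition~\ref{convprop} delivers convergence in law to the unique probability measure they determine. In the complementary cases, the total mass is strictly less than one---intuitively, by Borel--Cantelli only finitely many (or no) $X_i$ exceed $x_0$, and then with positive probability $\sigma_n(1)$ is forced out to ever larger indices as $n$ grows---so convergence in law fails. The main obstacle is step (iii): correctly identifying $F(x_0)$ as an atom of $\sup_i X_i$ corresponding exactly to the escape-of-mass event, and recognizing that the Borel--Cantelli threshold $f(x_0)=\infty$ is precisely what kills this atom and therefore what rescues tightness.
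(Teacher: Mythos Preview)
Your steps (i) and (ii) coincide with the paper's argument: the finite-$n$ integral formula via the exponential representation, followed by dominated convergence. The divergence is entirely in step (iii).

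The paper establishes the equivalence ``$\{\sigma_n(i)\}_{n\ge1}$ tight for all $i$ $\iff$ $x_0<\infty$ and $f(x_0)=\infty$'' by direct estimates. For the forward implication it argues by contradiction: assuming $f(x_0)<\infty$, it decomposes $\{\sigma_n(1)\le a\}$ according to whether $\max_{i\le n}X_i$ falls in $(x,x_0]$ and lets $x\nearrow x_0$ to produce a uniform-in-$a$ upper bound $1-\prod_i(1-e^{-\theta_i x_0})<1$. For the reverse implication it splits into the cases $x_0=0$ (where Borel--Cantelli gives $X_i\to 0$ a.s.) and $x_0>0$ (where it bounds $P(\sigma_n(i)\ge a)$ combinatorially via the partial sums $f_n(x)$ and products $g_n(x)$, then tunes $x\searrow x_0$).

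Your route is different and more structural: you identify the total limiting mass $\sum_a p(a)$ with $P(\sup_i X_i\text{ is uniquely attained})$, and observe that this probability is exactly $1-F(x_0)$ when $x_0<\infty$ (the complement being the atom $\{S=x_0\}$, on which the supremum cannot be attained since each $X_i$ is atomless) and $0$ when $x_0=\infty$. This cleanly avoids the case split and the $f_n,g_n$ estimates; in effect you show $\sigma_n(j)\to I_j$ almost surely, where $I_j$ is the index of the $j$th order statistic of $(X_i)_{i\ge1}$, well-defined precisely under the stated condition. What the paper's approach buys is explicit non-asymptotic tail bounds on $P(\sigma_n(i)\ge a)$; what yours buys is a transparent identification of the missing mass with a concrete event.

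Two places in your write-up deserve tightening. First, the sentence ``integration gives the absolutely continuous mass of $F$'' presumes that $F$ is absolutely continuous on $(x_0,\infty)$ with the termwise derivative; it is cleaner to bypass this by reading $p(a)=P(X_a>\sup_{i\ne a}X_i)$ directly and summing the disjoint events, as your final paragraph hints. Second, your parenthetical ``Fatou-plus-upper-bound-$1$'' does not by itself give equality for $k\ge2$; your primary claim---iterated marginalization $\sum_{a_k}p(a_1,\ldots,a_k)=p(a_1,\ldots,a_{k-1})$---is the correct argument, and it goes through because under the hypothesis the supremum of $\{X_i:i\notin\{a_1,\ldots,a_{k-1}\}\}$ is also almost surely attained (the condition being stable under removing finitely many indices).
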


Before proving the theorem, let us work out some simple examples. Suppose that $\theta_i = i$ for each $i$. This corresponds to the Luce model with the Sukhatme weights. Then clearly $f(x)<\infty$ for all $x>0$, and hence $x_0=0$. Also, clearly, $f(0)=\infty$. Therefore in this case $\sigma_n$ converges in law as $n\to\infty$. Moreover, by the formula displayed above,
\begin{align*}
    \lim_{n\to\infty} P(\sigma_n(1)=1) = \int_0^\infty e^{-x}\prod_{j=2}^\infty (1-e^{-jx}) dx = \int_0^1 \prod_{j=2}^\infty (1-y^j) dy. 
\end{align*}
On the other hand, for the case of uniform random permutations, $\theta_i=1$ for all $i$. In this case, $f(x)=\infty$ for all $x$, and hence $x_0=\infty$. Thus, the theorem implies that $\sigma_n$ does not converge in law (which we know already).

Next, suppose that $\theta_i = \beta \log (i+1)$ for some $\beta >0$. Here $f(x)<\infty$ for $x>1/\beta$ and $f(x)=\infty$ for $x\leq 1/\beta$. Thus, $x_0=1/\beta$ and $f(x_0)=\infty$, and so by the theorem, $\sigma_n$ converges in law. 

Strangely, $\sigma_n$ {\it does not} converge  in law if $\theta_i = \log (i+1) + 2\log \log (i+1)$. To see this, note that in this case,
\[
    f(x) = \sum_{i=1}^\infty \frac{1}{(i+1)^x (\log (i+1))^{2x}}.
\]
Thus, $f(x)<\infty$ for $x>1$ and $f(x)=\infty$ for $x<1$, showing that $x_0=1$. But 
\[
    f(x_0) = \sum_{i=1}^\infty \frac{1}{(i+1)(\log(i+1))^2} <\infty,
\]
which violates the second criterion required for convergence. This shows that we cannot determine  convergence purely by inspecting the rate of growth of $\theta_i$. The criterion is more subtle than that.

What happens if the tightness criterion does not hold? In this case, the formula for the limit of $P(\sigma_n(1)=a_1,\dots, \sigma_n(k)=a_k)$ remains valid, but it may not represent a probability mass function, i.e., the sum over all $a_1,\ldots, a_k$ may be strictly less than $1$.

\begin{proof}[Proof of Theorem \ref{convthm}]
    Take any $k\ge 1$ and distinct positive integers $a_1,\dots, a_k$. Take $n \geq \max_{1 \leq i \leq k} a_i$. Let $E_n$ be the event $\{\sigma_n(1)=a_1,\dots, \sigma_n(k)=a_k\}$. Then
    \begin{align*}
        &P(E_n) = P(X_{a_1} > X_{a_2}>\cdots > X_{a_k} > X_i \ \forall i\in[n]\setminus \{a_1,\ldots,a_k\})\\
        &= \int_{x_1>x_2>\cdots > x_k>0} \prod_{j=1}^k(\theta_{a_j} e^{-\theta_{a_j} x_j}) \prod_{i\in [n]\setminus \{a_1,\ldots,a_k\}} (1-e^{-\theta_i x_{k}}) \, \mathrm{d}x_1\cdots \mathrm{d}x_k.
    \end{align*}
    By the dominated convergence theorem, this gives
    \begin{align*}
        \lim_{n\to\infty} P(E_n) = \int_{x_1>x_2>\cdots > x_k>0} \prod_{j=1}^k(\theta_{a_j} e^{-\theta_{a_j} x_j}) \prod_{i\notin  \{a_1,\dots,a_k\}} (1-e^{-\theta_i x_{k}}) \, \mathrm{d}x_1\cdots \mathrm{d}x_k.
    \end{align*}
    Thus, we have shown that for any $k$ and distinct positive integers $a_1,\ldots,a_k$, $\lim_{n\to\infty} P(\sigma_n(1)=a_1,\ldots,\sigma_n(k)=a_k)$ exists, and also found the desired formula for the limit. However, we have not shown convergence in law because we have not established tightness. (This is not surprising, because we did not use any properties of the $\theta_i$'s yet.) From what we have done until now, it follows that $(\sigma_n(1), \dots, \sigma_n(k))$ converges in law as $n\to\infty$ if and only if it is a tight family. But this holds if and only if $\{\sigma_n(i)\}_{n\geq 1}$ is a tight family for every $i$. We will now complete the proof of the theorem by showing that $\{\sigma_n(i)\}_{n\geq 1}$ is a tight family for every $i$ if and only if $x_0 < \infty$ and $f(x_0)=\infty$. 

    First, suppose that $\{\sigma_n(1)\}_{n\ge 1}$ is a tight family. Then there is some $a$ such that 
    \[
    \lim_{n\to\infty} P(\sigma_n(1)=a)>0.
    \]
    From the above calculation, we know that
    \begin{align*}
        \lim_{n\to\infty}P(\sigma_n(1)=a) &= \int_0^\infty \theta_a e^{-\theta_a x} \prod_{i\neq a} (1-e^{-\theta_i x}) \, \mathrm{d}x.
    \end{align*}
    If this is nonzero, then there is at least one $x>0$ for which 
    \[
        \prod_{i=1}^\infty (1-e^{-\theta_i x}) >0.
    \]
    But this implies that
    \[
        f(x) = \sum_{i=1}^\infty e^{-\theta_i x} <\infty. 
    \]
    Thus, $x_0<\infty$. Next, we show that $f(x_0)=\infty$. Suppose not. Then $x_0>0$, since $f(0)=\infty$. Fix a positive integer $a$. For each $n\geq a$, and let $A_n$ be the event $\left\{ \sigma_n(1) \leq a \right\}$. Let $F_n$ be the event $\left\{ \max_{i\leq n} X_i \leq x_0 \right\}$. Take any $x \in (0,x_0)$ and let $G_n$ be the event $\left\{ \max_{i\leq n} X_i \leq x \right\}$. Then
    \begin{align*}
        P(A_n) &\leq P(A_n \cap (F_n\setminus G_n)) + P((F_n\setminus G_n)^c)\\
        &= P(A_n\cap (F_n\setminus G_n)) + P(F_n^c \cup G_n)\\
        &\leq P(A_n\cap (F_n\setminus G_n))  + P(G_n) + P(F_n^c).
    \end{align*}
    If the event $A_n\cap(F_n\setminus G_n)$ happens, then $\max_{i\leq n} X_i$ belongs to the interval $(x,x_0]$, and one of $X_1,\dots, X_a$ is the maximum among $X_1,\dots,X_n$. Thus, in particular, one of $X_1,\ldots,X_a$ is in $(x,x_0]$. Plugging this into the above inequality, we get
    \begin{align*}
        P(A_n) &\leq \sum_{i=1}^a (e^{-\theta_i x} - e^{-\theta_i x_0}) + \prod_{i=1}^n (1-e^{-\theta_i x}) + 1 - \prod_{i=1}^n(1-e^{-\theta_i x_0}).
    \end{align*}
    Since $f(x)=\infty$, we have $\displaystyle \prod_{i=1}^\infty (1-e^{-\theta_i x}) = 0$. Thus, taking $n \to \infty$ on both sides, we get
    \begin{align*}
        \lim_{n\to\infty} P(A_n) &\leq \sum_{i=1}^a (e^{-\theta_i x} - e^{-\theta_i x_0}) +   1 - \prod_{i=1}^\infty(1-e^{-\theta_i x_0}).
    \end{align*}
    Now notice that the definition of $A_n$ does not involve $x$. So we can take $x\nearrow x_0$ on the right, which makes the first term vanish and leaves the rest as it is. Thus,
    \[
        \lim_{n\to\infty} P(A_n) \leq 1 - \prod_{i=1}^\infty(1-e^{-\theta_i x_0}).
    \]
    But the assumed finiteness of $f(x_0)$ implies that the product on the right is strictly positive. Thus, we get an upper bound on $\displaystyle \lim_{n\to\infty} P(A_n)$ which is less than $1$. But observe that this upper bound does not depend on $a$. This contradicts the tightness of $\sigma_n(1)$, thereby completing the proof of one direction of the theorem.

    Next, suppose that $x_0<\infty$ and $f(x_0)=\infty$. 
    We consider two cases. First, suppose that $x_0=0$. Then $f(x)<\infty$ for each $x>0$. But 
    \begin{align}\label{fdef2}
        f(x) = \sum_{i=1}^\infty P(X_i > x). 
    \end{align}
    Therefore by the Borel--Cantelli lemma, $X_i \to 0$ almost surely as $i \to \infty$. Now take any $i$ and integers $n$ and $a$ bigger than $i$. Then the event $\sigma_n(i) \geq a$ implies that 
    \[
        \max_{j \geq a} X_j > \min \left\{X_1, \dots ,X_i \right\},
    \] 
    because otherwise the $i$th largest value among $(X_j)_{j=1}^n$ cannot be one of $(X_j)_{j\geq a}$. Thus, 
    \[
        P(\sigma_n(i) \geq a) \leq P\left(\max_{j\ge a} X_j > \min \left\{ X_1, \dots, X_i \right\} \right).
    \]
    But the right side is a function of only $a$ (and not $n$), and tends to zero as $a\to \infty$ because $X_j \to 0$ almost surely as $j \to \infty$. This proves tightness of $\left\{ \sigma_n(i) \right\}_{n \geq 1}$ when $x_0=0$.

    Next, consider the case $x_0>0$. For convenience, let us define the partial sums
    \[
        f_n(x) := \sum_{i=1}^n e^{-\theta_i x}, \qquad g_n(x) := \prod_{i=1}^n \left(1-e^{-\theta_i x} \right). 
    \]
    Take $i$, $n$ and $a$ as before. Let $x$ be a real number bigger than $x_0$, to be chosen later. The event $\sigma_n(i) \geq a$ implies that at least one of the following two events must happen: (a) There are less than $i$ elements of $(X_j)_{j=1}^n$ that are bigger than $x$, or (b) $X_j>x$ for some $j\geq a$. This gives
    \begin{align*}
        P(\sigma_n(i)\geq a) &\leq \sum_{A\subseteq[n], \lvert A \rvert < i} \left( \prod_{j\in A} e^{-\theta_j x} \right) \left(\prod_{j\in [n]\setminus A} \left( 1 - e^{-\theta_j x} \right) \right) + \sum_{j\geq a} e^{-\theta_j x}.
    \end{align*}
    Now note that for any $A\subseteq [n]$ with $|A|< i$, 
    \begin{align*}
        \prod_{j\in [n]\setminus A} \left( 1-e^{-\theta_j x} \right)  &\leq \frac{g_n(x)}{\displaystyle \prod_{j\in A} \left( 1-e^{-\theta_j x} \right)} \leq \frac{g_n(x)}{\left( 1-e^{-\theta_1 x_0} \right)^{i-1}}. 
    \end{align*}
    Therefore 
    \begin{align*}
        \sum_{A\subseteq[n], \ |A|< i} \left( \prod_{j\in A} e^{-\theta_j x} \right) \left( \prod_{j\in [n]\setminus A} \left( 1-e^{-\theta_j x} \right) \right) & \leq \frac{g_n(x)}{(1-e^{-\theta_1 x_0})^{i-1}}\sum_{A\subseteq [n], \ |A|< i} \left(\prod_{j\in A} e^{-\theta_j x} \right)\\
        &\leq \frac{g_n(x)(1+f_n(x)+f_n(x)^2+\cdots+ f_n(x)^{i-1})}{(1-e^{-\theta_1 x_0})^{i-1}}.
    \end{align*}
    By the inequality $1-x \leq e^{-x}$, we have $g_n(x) \leq e^{-f_n(x)}$. Thus,
    \begin{align*}
        P(\sigma_n(i) \geq a) &\leq \frac{e^{-f_n(x)}(1+f_n(x)+f_n(x)^2+\cdots+ f_n(x)^{i-1})}{(1-e^{-\theta_1 x_0})^{i-1}} + \sum_{j\ge a} e^{-\theta_j x}.
    \end{align*}
    Let $m$ be the largest integer such that $\theta_m \leq 1/(x-x_0)$. Suppose that $n \geq m$. Then 
    \begin{align*}
        f_n(x) &\geq \sum_{j=1}^m e^{-\theta_j x} = \sum_{j=1}^m e^{-\theta_j x_0 - \theta_j(x-x_0)} \geq e^{-1}\sum_{j=1}^m e^{-\theta_j x_0}. 
    \end{align*}
    But $m\to\infty$ as $x \searrow x_0$, and $f(x_0)=\infty$ by assumption. Thus, the above inequality shows that given any $L > 0$, we can first choose $x$ sufficiently close to $x_0$, and then choose $n_0$ sufficiently large, such that for all $n \geq n_0$, $f_n(x) \geq L$. Now take any $\varepsilon >0$ and find $L$ so large that for all $y \geq L$,
    \[
        \frac{e^{-y}(1+y+y^2+\cdots+ y^{i-1})}{(1-e^{-\theta_1 x_0})^{i-1}} < \frac{\varepsilon}{2}. 
    \]
    Choose $x$ and then $n_0$ as in the previous paragraph corresponding to this $L$. Then find $a$ so large that 
    \[
        \sum_{j \geq a} e^{-\theta_j x}< \frac{\varepsilon}{2}, 
    \]
    which exists since $f(x) <\infty$. For this choice of $a$, the above steps show that $P(\sigma_n(i) \geq a) \leq \varepsilon$ for all $n \geq n_0$. This proves tightness of $\left\{ \sigma_n(i) \right\}_{n \geq 1}$ when $x_0>0$, completing the proof of the theorem. 
\end{proof}

\section{A vast generalization -- hyperplane walks}\label{hyperplanesec}

\subsection{Introduction}

The Tsetlin library has seen vast generalizations in the past twenty years. In this section, we explain walks on the chambers of a hyperplane arrangement due to Bidigare--Hanlon--Rockmore \cite{BHR} and Brown--Diaconis \cite{Brown2}. The Tsetlin library is a (very) special case of the braid arrangement. These Markov chains have a fairly complete theory (simple forms for the eigenvalues and good rates of convergence to stationarity). But the description of the stationary distribution, the analog of the Luce model, is indirect, involving a weighted sampling without replacement scheme. Thus the problem
\begin{center}
    \emph{What does the stationary distribution of hyperplane walks look like?}
\end{center}
Section \ref{hypersec} sets things up and states the main theorems (with examples). The few cases where something is known are reported in Section \ref{stationarysec}. The final section points to semigroup walks where parallel problems remain open. The main point of this section is to cast Sections \ref{backgroundsec}--\ref{bottomsec} above as contributions to a general problem.

\subsection{Hyperplane walks}\label{hypersec}

We work in $\mathbb{R}^d$. Let $\mathcal{A} = \left\{ H_1, H_2, \dots, H_k \right\}$ be a finite collection of affine hyperplanes (translates of codimension one subspaces). These divide $\mathbb{R}^d$ into
\begin{itemize}
    \item chambers (points not on any $H_i$). Let $\mathcal{C}$ be the chambers.
    \item faces (points on some $H_i$ and on one side or another of others). Let $\mathcal{F}$ be the faces.
\end{itemize}

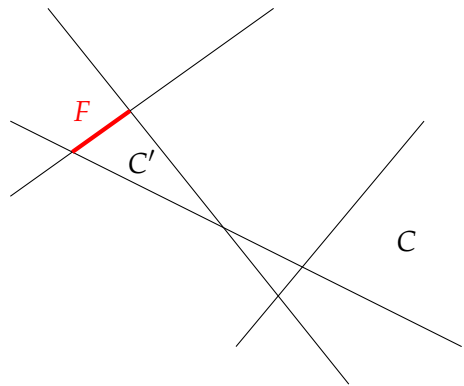
\begin{figure}[h!]
    \centering
     \begin{tikzpicture}
        \draw (-3,1) -- (3,-2);
        \draw (-2.5,2.5) -- (1.5,-2.5);
        \draw (-3,0) -- (0.5,2.5);
        \draw (0,-2) -- (2.5,1);
        \draw[ultra thick,red] ({-37/17},{10/17}) -- ({-31/22},{25/22});
        \node[above left] at ({(-37/17 - 31/22)/2}, {(10/17 + 25/22)/2}) {\color{red}$F$};
        \node[below] at (-1.25,0.75) {$C'$};
        \node[right] at (2,-0.6) {$C$};
    \end{tikzpicture}
    \caption{Four lines in $\mathbb{R}^2$. There are 10 chambers and 30 faces (chambers, points of intersection and the empty face are faces).}
\end{figure}

A key notion is the projection of a chamber onto a face (Tits projection). For $C \in \mathcal{C}$ and $F \in \mathcal{F}$, $\text{PROJ } C \to F$ is the unique chamber adjacent to $F$ and closest to $C$ (in the sense of crossing the fewest number of $H_i$'s). In the above figure, $\text{PROJ } C \to F = C'$.

With these definitions, we are ready to walk. Choose face weights $\left\{ w_F \right\}_{F \in \mathcal{F}}$ with $w_F \geq 0$ and $\displaystyle \sum_{F \in \mathcal{F}} w_F = 1$. Define a Markov chain $\kappa(C, C')$ on chambers via:
\begin{itemize}
    \item from $C$, choose $F \in \mathcal{F}$ with probability $w_F$ and move to $\text{PROJ } C \to F$.
\end{itemize}
Thus, $\displaystyle \kappa(C,C') = \sum_{\text{PROJ }C \to F = C'} w_F$.

\begin{example}[Boolean arrangements]
    Let $H_i = \left\{ x \in \mathbb{R}^d : x_i = 0 \right\}, 1 \leq i \leq d$ be the usual coordinate hyperplanes. These divide $\mathbb{R}^d$ into $2^d$ chambers (the usual orthants) and $3^d$ faces. A face may be labeled by a vector of length $d$ containing $0, \pm1$ to delineate 0 (on) or on one side or the other of the $i$th hyperplane. Chambers are faces with no zeros. For $\text{PROJ } C \to F = C'$, set the $i$th coordinate of $C'$ to the $i$th coordinate of $F$ if this is $\pm 1$ and leave it as the $i$th coordinate of $C$ if the $i$th coordinate of $F$ is 0.

    Thus, a walk proceeds via: from $C$, pick a subset of coordinates and install $\pm1$ in them as determined by $F$. For example, if
    \[
        w_F = \begin{cases} \displaystyle \frac{1}{2d} & F = (0,\dots,0,\pm1,0,\dots,0), \\ 0 & \text{otherwise,} \end{cases}
    \]
    the walk becomes ``pick a coordinate at random and replace it with $\pm1$ chosen uniformly.'' This is the celebrated Ehrenfest urn model of statistical physics. Dozens of natural specializations of these Boolean walks are spelled out in \cite{Brown2}.
\end{example}

\begin{example}[Braid arrangement]
    Take $H_{ij} = \left\{ x \in \mathbb{R}^d: x_i = x_j \right\}, 1 \leq i < j \leq d$. Now, the chambers are points in $\mathbb{R}^d$ with no equal coordinates. It follows that the relative order is fixed within a chamber, so chambers can be labeled by permutations. The faces are indexed by ``block ordered set partitions'': coordinates within a block are equal and all coordinates in the first block are smaller than the coordinates in the second block, and so on.

    For the projection, suppose the chamber labeled $\pi$ is thought of as a deck of cards in arrangement $\pi$ (with $\pi(i)$ the label of the card at position $i$). Suppose $d = 5$ and the face is $F = 1 \, 3 / 2 / 4 \, 5$. Remove cards labeled 1 and 3 from $\pi$ (keeping them in their same relative order, then remove the card labeled 2 and place it under cards 1, 3. Finally, remove cards labeled 4, 5 and place them at the bottom of the five card deck. This is $\text{PROJ } \pi \to 1 \, 3 / 2 / 4 \, 5$.

    The \textbf{Tsetlin library} arises from the choice
    \[
        w_F = \begin{cases} \theta_i & \text{if $F = i / [n] \setminus i$} \\ 0 & \text{otherwise} \end{cases}.
    \]
    That is the walk on $S_n$ with "choose label $i$ with probability $\theta_i$ and move this card to the top."

    \textbf{Riffle shuffling} arises from
    \[
        w_F = \begin{cases} \displaystyle \frac{1}{2^d} & \text{if $F = S / [n] \setminus S$ for $S \subseteq [n]$} \\ 0 & \text{otherwise} \end{cases}.
    \]
    Another way to say this -- label each of $d$ cards in the current deck with a fair coin flip, remove all cards labeled ``heads'' keeping them in their same relative order, and place them on top. This is exactly ``inverse riffle shuffling,'' the inverse of the Gilbert--Shannon--Reeds model studied by Bayer--Diaconis \cite{BayDia}.
\end{example}
There are hundreds of other hyperplane arrangements where the chambers are labeled by natural combinatorial objects, and there are choices of face weights so that the walk is a natural object ot study. Indeed, any finite reflection group leads to a hyperplane arrangement with $H_\mathbf{v}$ being the hyperplane orthogonal to the vector $\mathbf{v}$ determining the reflection. Any finite graph leads to a "graphical arrangement." For a wonderful exposition, see Stanley \cite{Stanley}.

As said, the Markov chains $\kappa(C,C')$ admit a complete theory with known eigenvalues and rates of convergence. We will not spell this out here; see \cite{Brown2}, but turn to the main object of interest -- the stationary distribution.

Let $\mathcal{A}$ be a general arrangement with chosen face weights $\left\{ w_F \right\}_{F \in \mathcal{F}}$ and $\kappa(C,C')$ the associated Markov chain on $C$, the chambers of the arrangement. $\pi(C) \geq 0$ and $\sum_{C} \pi(C) = 1$ is stationary for $\kappa$ if $\sum_C \pi(C) \kappa(C,C') = \pi(C')$ -- thus $\pi$ can be thought of as a left eigenvector with eigenvalue 1. When does a unique such $\pi$ exist?

\begin{theorem}[Brown--Diaconis]
    Call $\left\{ w_F \right\}$ \textbf{separating} if they are not all supported in the same hyperplane (for $H \in \mathcal{A}$, there exists $H' \in \mathcal{A}$ and $w_F > 0$ for $F \subset H'$). Then $\kappa$ has a unique stationary distribution $\pi(C)$ if and only if $\left\{ w_F \right\}$ are separating.
\end{theorem}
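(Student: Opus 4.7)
The plan is to exploit the face semigroup structure on $\mathcal{F}$ and then read off both directions from a single observation about left multiplication. Recall that for faces $F, G \in \mathcal{F}$ one defines the product $FG \in \mathcal{F}$ by the sign rule
\[
\sigma_H(FG) = \begin{cases} \sigma_H(F) & \text{if } F \not\subset H, \\ \sigma_H(G) & \text{if } F \subset H, \end{cases}
\]
for each $H \in \mathcal{A}$, where $\sigma_H \in \{0,+,-\}$ records the position relative to $H$. This product is associative; chambers are precisely the faces with no zero sign; and for any chamber $C$ the Tits projection satisfies $\text{PROJ}(C \to F) = FC$. Moreover chambers are left zeros: $CG = C$ for every face $G$. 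Hence running the walk for $k$ steps with faces $F_1, F_2, \ldots, F_k$ (in that order) sends $C$ to $(F_k \cdots F_1)C$, and if that product is itself a chamber, the output is independent of $C$.

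For necessity, suppose $\{w_F\}$ is not separating, so there is a fixed $H \in \mathcal{A}$ with $F \subset H$ for every $F$ having $w_F > 0$. The sign rule then gives $\sigma_H(FC) = \sigma_H(C)$ at each transition, so each step of $\kappa$ preserves the side of $H$ on which the current chamber lies. Partitioning $\mathcal{C}$ into the two nonempty open halfspaces cut out by $H$ yields two nonempty $\kappa$-invariant subsets; the restricted chain on each carries its own stationary distribution, and extending by zero gives two distinct stationary distributions for $\kappa$.

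For sufficiency, assume $\{w_F\}$ is separating. Enumerate $\mathcal{A} = \{H_1, \ldots, H_m\}$ and for each $j$ choose a face $F_j$ with $w_{F_j} > 0$ and $F_j \not\subset H_j$; this is possible by the separating hypothesis. An easy induction from the sign rule shows that for every $j$, the sign of the product $G := F_1 F_2 \cdots F_m$ on $H_j$ agrees with the sign of the leftmost factor not contained in $H_j$, which is nonzero; hence $G$ is a chamber $C^*$. With probability at least $\prod_{j=1}^m w_{F_j} > 0$, the chain started from any $C \in \mathcal{C}$ picks the sequence $F_1, \ldots, F_m$ in its first $m$ steps and lands at $GC = C^* C = C^*$, using the left-zero property. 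Thus $C^*$ is accessible from every state; every closed communicating class must therefore contain $C^*$; and since a state lies in at most one communicating class, there is exactly one such class, yielding a unique stationary distribution.

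The main obstacle I have taken for granted is justifying the algebraic scaffolding — associativity of the face product, the identification $\text{PROJ}(C \to F) = FC$, and the left-zero property of chambers. These are the standard semigroup facts underlying the Brown--Diaconis framework and are worked out in \cite{Brown2}; once granted, the two short arguments above deliver the theorem.
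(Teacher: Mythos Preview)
The paper does not prove this theorem; it is quoted from \cite{Brown2} as background for Section~\ref{hyperplanesec}, so there is no in-paper argument to compare against. Your argument is essentially the standard face-semigroup proof from that reference and is correct.

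One small slip to fix: you define $G = F_1 F_2 \cdots F_m$ and then assert that after the chain picks $F_1,\ldots,F_m$ in that order it lands at $GC$. By your own formula, applying $F_1$ first and $F_m$ last yields $(F_m\cdots F_1)C$, not $(F_1\cdots F_m)C$. The repair is trivial---either have the chain pick the sequence $F_m,\ldots,F_1$, or set $G:=F_m\cdots F_1$; the argument that the product is a chamber goes through verbatim, since for each $H_j$ some factor (namely $F_j$) lies off $H_j$, so the leftmost such factor gives a nonzero sign on $H_j$ regardless of the ordering.
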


This $\pi$ is the analog of the Luce model and becomes the Luce model for the braid arrangement as above. The following result gives a ``weighted sampling without replacement characterization'' of $\pi(C)$.

\begin{theorem}[Brown--Diaconis]
    Suppose $\left\{ w_F \right\}$ are separating. The following algorithm generates a pick from $\pi(C)$:
    \begin{itemize}
        \item place all $\left\{ w_F \right\}$ in an urn.
        \item draw them out, without replacement, with probability proportional to size (relative to what is left).
        \item say this results in the ordered list $F_1, F_2, \dots, F_{\lvert \mathcal{F} \rvert}$.
        \item from any starting chamber $C$ (the choice does not matter), project on $F_{\lvert \mathcal{F} \rvert}$, then on $F_{\lvert \mathcal{F} \rvert - 1}$, and so on until $F_1$. The resulting chamber is exactly distributed as $\pi(C)$.
    \end{itemize}
\end{theorem}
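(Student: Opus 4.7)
The plan is to identify $\pi$ with a Brown--Diaconis ``iterated random function'' limit driven by an i.i.d.\ sequence of faces, and then to recognize the Luce urn as precisely the sequence of distinct values in such a sequence. The crucial algebraic input is the face semigroup structure on $\cF$: define $F \cdot G$ by placing, at each hyperplane $H \in \cA$, $(F\cdot G)$ on the side of $F$ if $F \not\subseteq H$, and on the side of $G$ otherwise. This product is associative, $\operatorname{PROJ}(C \to F) = F \cdot C$, and chambers are right zeros ($C \cdot X = C$). A per-hyperplane check gives the two identities $F \cdot F = F$ and $F \cdot G \cdot F = F \cdot G$, from which an easy induction yields the \emph{absorption lemma}: for any list $g_1, \ldots, g_k$ of faces, the product $g_1 g_2 \cdots g_k$ equals $\tilde g_1 \tilde g_2 \cdots \tilde g_s$, where $(\tilde g_1, \ldots, \tilde g_s)$ is the subsequence of distinct elements in order of first appearance.

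First I would install the iterated random function representation. Let $f_1, f_2, \ldots$ be i.i.d.\ with $P(f_n = F) = w_F$ for each $F$ with $w_F > 0$, and fix any starting chamber $C_0$. Set $R_n := f_1 \cdot f_2 \cdots f_n \cdot C_0$. By reversing the i.i.d.\ indices, $R_n$ has the same distribution as $f_n \cdot f_{n-1} \cdots f_1 \cdot C_0$, i.e., the chain $\kappa$ after $n$ steps from $C_0$. By the absorption lemma, $R_n = \tilde f_1 \cdots \tilde f_{s(n)} \cdot C_0$, where $s(n)$ is the number of distinct values seen in $f_1, \ldots, f_n$. Almost surely $s(n) \uparrow m := \lvert \{F : w_F > 0\} \rvert$, and since $\tilde f_1 \cdots \tilde f_m$ uses every weighted face, the separating hypothesis makes this product off every hyperplane, hence a chamber, which then absorbs the trailing $\cdot C_0$. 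Thus $R_n$ stabilizes almost surely at $R_\infty := \tilde f_1 \cdots \tilde f_m$. A shift argument on the i.i.d.\ sequence shows $f_0 \cdot R_\infty'$ has the same law as $R_\infty$ for independent $f_0 \sim w$ and $R_\infty' \stackrel{d}{=} R_\infty$, so the law of $R_\infty$ is stationary for $\kappa$ and, by the Brown--Diaconis uniqueness theorem above, equals $\pi$.

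Next I would check that $(\tilde f_1, \ldots, \tilde f_m)$ is Luce-distributed on the weighted faces. Conditional on $\tilde f_1, \ldots, \tilde f_{k-1}$, the next new face $\tilde f_k$ is the first $f_n$ outside the already-seen set, which by elementary conditioning equals any admissible $G$ with probability $w_G / (1 - w_{\tilde f_1} - \cdots - w_{\tilde f_{k-1}})$. This is precisely one step of the Luce recursion \eqref{luce}, so $(\tilde f_1, \ldots, \tilde f_m)$ agrees in law with the urn-drawn list $(F_1, \ldots, F_m)$ of the statement. Combining the two steps, $\pi$ is the law of $F_1 \cdot F_2 \cdots F_m$; because this product is already a chamber, right-multiplying by any starting chamber $C$ leaves it unchanged, and the successive-projection algorithm of the statement builds exactly $F_1 \cdot F_2 \cdots F_m \cdot C$.

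The main obstacle will be the semigroup bookkeeping. The identities $F \cdot F = F$ and $F \cdot G \cdot F = F \cdot G$ are routine per-hyperplane checks, but they must be carried out carefully and iterated to establish the collapse $g_1 \cdots g_k = \tilde g_1 \cdots \tilde g_s$ in full generality. Once the absorption lemma and the iterated random function picture are in hand, the identification with the Luce urn is just the elementary conditional-probability calculation above.
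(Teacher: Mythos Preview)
The paper does not give its own proof of this theorem; it is stated as a result of Brown--Diaconis \cite{Brown2} and simply cited. So there is nothing in the paper to compare your argument against.

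That said, your proposal is correct and is essentially the original Brown--Diaconis argument: represent the chain via the left-regular-band product $F\cdot C=\operatorname{PROJ}(C\to F)$, run the chain backwards on an i.i.d.\ face sequence, use the deletion property $xyx=xy$ to collapse the reversed product to the product of first appearances, and observe that the sequence of first appearances from an i.i.d.\ stream is exactly size-biased (Luce) sampling without replacement. The separating hypothesis guarantees the collapsed product is a chamber, so the trailing $\cdot\,C_0$ drops out and the limit is independent of the start. Your stationarity check via the shift and the appeal to uniqueness are the standard closing steps. One cosmetic point: the statement writes the ordered list as $F_1,\ldots,F_{|\mathcal F|}$, but your argument (correctly) only ever produces the $m=|\{F:w_F>0\}|$ faces of positive weight; this is a harmless convention in the statement rather than a gap in your proof, since zero-weight faces are never drawn and, even if appended at the end, are absorbed by the chamber already formed.
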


Of course, for the Tsetlin library, this is just the Luce measure on permutations. The following subsection delineates the few examples where something can be said about $\pi$.

\subsection{Understanding $\pi$}\label{stationarysec}

Suppose a group of orthogonal transformations acts transitively on $\mathcal{A}$ preserving $\kappa(C,C')$. Then, $\pi(C)$ is uniform over $\mathcal{C}$ (supposing separability). Examples include riffle shuffles, the Ehrenfest urn, and ``random to top'' (the Tsetlin library with $\theta_i = \frac{1}{n}, 1 \leq i \leq n$). For more on this, see \cite{Nestoridi}.

Simple features of $\pi$ can sometimes be calculated directly. See Pike \cite{Pike} and its references.

Aside from the present paper, the only other examples that have been carefully studied are in the following graph coloring problems.

\subsubsection{Graph coloring}

Let $G$ be a connected and undirected simple graph. Let $\mathcal{X}$ be the set of 2-colorings (say by $\pm$) of the vertex set of $G$. Define a Markov chain on $\mathcal{X}$ by
\begin{itemize}
    \item from $x \in \mathcal{X}$
    \item pick an edge $e \in G$ uniformly at random
    \item change the two endpoints of $e$ in $x$ to be
    \begin{tikzpicture}
        \draw (0,0) -- (1,0);
        \node[above] at (0,0) {+};
        \node[above] at (1,0) {+};
        \node[below] at (0.5,0) {$e$};
        \fill (0,0)  circle[radius=2pt];
        \fill (1,0)  circle[radius=2pt];
    \end{tikzpicture}
    or
    \begin{tikzpicture}
        \draw (0,0) -- (1,0);
        \node[above] at (0,0) {$-$};
        \node[above] at (1,0) {$-$};
        \node[below] at (0.5,0) {$e$};
        \fill (0,0)  circle[radius=2pt];
        \fill (1,0)  circle[radius=2pt];
    \end{tikzpicture}
    with probability $\frac{1}{2}$.
\end{itemize}
Thus ``neighbors are inspired to match, at random times.'' This is a close cousin of standard particle systems such as the voter model. All the theory works. The process is a hyperplane walk for the Boolean arrangement of dimension $D$, where $D$ denotes the number of edges in the graph $G$. All eigenvalues and rates of convergence are easily available.

The only thing open is
\begin{center}
    ``what can be said about the stationary distribution?''
\end{center}
To understand the question, suppose the graph is an $n$-point path
\begin{center}
    \begin{tikzpicture}
        \draw (0,0) -- (2,0) (3.5,0) -- (4.5,0);
        \fill (0,0)  circle[radius=2pt];
        \fill (1,0)  circle[radius=2pt];
        \fill (2,0)  circle[radius=2pt];
        \fill (3.5,0)  circle[radius=2pt];
        \fill (4.5,0)  circle[radius=2pt];
        \node[above] at (0,0) {+};
        \node[above] at (1,0) {+};
        \node[above] at (2,0) {$-$};
        \node[above] at (3.5,0) {+};
        \node[above] at (4.5,0) {+};
        \node[below] at (0,0) {1};
        \node[below] at (1,0) {2};
        \node[below] at (2,0) {3};
        \node[below] at (4.5,0) {$n$};
        \node at (2.75,0) {$\cdots$};
    \end{tikzpicture}
\end{center}
The distribution $\pi$ is far from uniform. All $+$ or all $-$ have chance $\frac{1}{2}$ of staying, but $+-+-\cdots$ is impossible. Of course, $\pi(x)$ is invariant under switching $+$ and $-$. It is easy to show that, under $\pi$, the $\pi$ process is a 1-dependent point process (see \cite{BDF}). This means various central limit theorems are available.

How much more likely is ``all $+$'' than ``many alternations''? This problem was carefully studied in a difficult paper by Chung and Graham \cite{ChungGraham} (see also \cite{BCG}). They show, under $\pi$, all $+$ (or all $-$) have chance of order $C/2^n$, but many alternations has chance of order $C'/n!$. Very nice systems of recursive differential equations appear.

The point is, even in the simplest case, understanding the stationary distribution leads to interesting mathematics. We offer the present paper in this spirit.

\subsection{Semigroups and beyond}

The past ten years have shown yet broader generalization of the Tsetlin library. Kenneth Brown extended it to idempotent semigroups (allowing walks on the chambers of a building) \cite{Brown}. Ben Steinberg, working wiht many coauthors, extended further in the semigroup direction. A convenient reference is the book length treatment \cite{MSS}.

In another direction, a sweeping generalization of much of moderm algebra based on hyperplane and semigroup walks has been developed by Aguiar and Mahajan \cite{AM1,AM2,AM3}. The three large volumes contain hundreds of fresh examples.

In \emph{none} of these developments is the stationary measure understood.

\section*{Acknowledgement}

Thanks to Jim Pitman for useful references.

\end{document}